\documentclass[11pt]{amsart}

\usepackage{amsthm}
\usepackage{amsmath}
\usepackage{amssymb}

\textwidth=5.5 true in

\newtheorem{Thm}{Theorem}[section]

\newtheorem{lemma}[Thm]{Lemma}

\newtheorem{proposition}[Thm]{Proposition}
\newtheorem{definition}[Thm]{Definition}
\newtheorem{remark}[Thm]{Remark}

\newtheorem{example}[Thm]{Example}
\newtheorem{theorem}[Thm]{Theorem}

\def\ldots{\mathinner{\ldotp\ldotp\ldotp}}
\def\ldots{\mathinner{\cdotp\cdotp\cdotp}}

\def \H{\mathbb H}
\def\F{\mathcal F}
\def\E{\cal E}
\def\G{\cal G}
\def \N{\mathbb N}
\def\Z{\mathbb Z}
\def\R{\mathbb R}

\def \cal{\mathcal}

\def \beq{\begin{eqnarray*}}
\def \eeq{\end{eqnarray*}}

\def \cc{\mathbf c}
\def \uu{ u }

\title[Infinite dimensional restricted invertibility]{Infinite dimensional
restricted invertibility}

\author[P.G. Casazza and G.E. Pfander]
{\small Peter G. Casazza and G\"otz E. Pfander}
\date{\small\today}

\email{pete@math.missouri.edu; g.pfander@jacobs-university.de}

\thanks{The first author was supported by NSF DMS 0704216 and
thanks the American Institute of Math for their continued support.}

\begin{document}

\maketitle

\begin{abstract}
The 1987 {\em Bourgain-Tzafriri Restricted Invertibility Theorem} is one of the most
celebrated theorems in analysis.  At the time of their work, the authors
raised the
question of a possible infinite dimensional version of the theorem.
  In this paper, we will  give a quite general definition of restricted
invertibility for operators on
infinite dimensional Hilbert spaces  based on the notion of  {\em density}
from frame theory.   We then prove that localized
Bessel systems have large subsets which are Riesz basic sequences.
As a consequence, we prove the strongest possible form of the infinite
dimensional
restricted invertibility theorem for $\ell_1$-localized operators
and for Gabor frames with generating function in the Feichtinger
Algebra.  For our calculations, we introduce a new notion of {\em density}
which has serious advantages over the standard form because it
is independent of index maps - and hence has much broader
application.  We then show that in the setting of the
restricted invertibility theorem, this new density becomes equivalent to
the standard density. \\

\noindent {\sc Keywords.}  Restricted invertibility; density, localization,  Hilbert space frames, Gabor analysis.\\
\noindent {\sc AMS MSC (2000).} 42C15, 46C05, 46C07.
\end{abstract}

\section{Introduction}\label{Section:Introduction}

In 1987, Bourgain and Tzafriri proved one of the most celebrated and
useful theorems in analysis \cite{BT}:  {\em The Bourgain-Tzafriri Restricted
Invertibility Theorem}.  The form we give now can be found in Casazza \cite{C},
Vershynin \cite{V1} (where the restriction that the norms of the vectors
$Te_i$ equal one - or even are bounded below - is removed), and
Vershynin \cite{V2,V3} (also see Casazza and Tremain \cite{CT1}).

\begin{theorem}[\bf Restricted Invertibility Theorem]
\label{theorem:RestrictedInvertTheoremFinite1}
There exists a function
$\cc:(0,1)\longrightarrow (0,1)$ so that for every $n\in \N$ and every linear
operator $T:\ell_2^n \rightarrow \ell_2^n$ with $\|Te_i\|=1$ for
$i=1,2,\ldots,n$ and $\{e_i\}_{i=1}^n$ an orthonormal basis for $\ell_2^n$,
there is a subset $J_\epsilon\subseteq \{1,2,\ldots,n\}$ satisfying\\[.1cm]

\noindent (1)  \quad $\displaystyle \frac {|J_\epsilon|} n \ge
\frac{(1-\epsilon)}{\|T\|^2}, $

\vskip12pt

\noindent (2)  For all $\{b_j\}_{j\in J_{\epsilon}}\in \ell_2(J_\epsilon)$
we have
\[
\|\sum_{j\in J_\epsilon}b_jTe_j\|^2 \ge \cc(\epsilon)\,
\sum_{j\in J_\epsilon}|b_j|^2.\]
\end{theorem}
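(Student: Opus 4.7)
The plan is to reformulate the assertion as a spectral extraction problem. Since $\|Te_i\|=1$, condition (2) is equivalent to the principal submatrix $G_{J_\epsilon}:=(\langle Te_i,Te_j\rangle)_{i,j\in J_\epsilon}$ of $G:=T^*T$ having smallest eigenvalue at least $\cc(\epsilon)$. Because $G$ has unit diagonal, $\|G\|=\|T\|^2$, and $\tr G = n$, the task becomes: extract a principal submatrix of $G$ of order at least $(1-\epsilon)n/\|T\|^2$ whose spectrum is bounded away from zero.

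My first attempt would follow the classical Bourgain-Tzafriri random selection strategy. Introduce independent Bernoulli selectors $\delta_1,\ldots,\delta_n$ with $P(\delta_i=1)=\tau$, where $\tau=(1-\epsilon/2)/\|T\|^2$, and set $J(\delta)=\{i:\delta_i=1\}$ and $D_\delta=\diag(\delta_i)$. Then $E|J(\delta)|=\tau n$, and a Chebyshev-type estimate yields $|J(\delta)|\ge(1-\epsilon)n/\|T\|^2$ with probability bounded below by an absolute constant. Writing $G_{J(\delta)}=I+(G-I)_{J(\delta)}$, it then suffices to prove $\|D_\delta(G-I)D_\delta\|\le 1-\cc(\epsilon)$ with positive probability on the same event, for then $\lambda_{\min}(G_{J(\delta)})\ge\cc(\epsilon)$ and condition (2) follows.

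The main obstacle is producing a dimension-free concentration estimate of the form $E\|D_\delta(G-I)D_\delta\|\le f(\tau\|T\|^2)$ with $f(s)\to 0$ as $s\to 0$. A direct application of the noncommutative Khintchine inequality or Rudelson's selector theorem produces an unwanted $\sqrt{\log n}$ factor, and removing that factor is the heart of the theorem. The original Bourgain-Tzafriri argument handles this by a two-stage selection: a first selection controls the maximum row $\ell_1$-mass of $(G-I)_{J(\delta)}$ through Grothendieck-type inequalities, and a nested second selection then bounds the spectral norm. A cleaner deterministic alternative, which avoids the concentration machinery entirely, is the Spielman-Srivastava barrier argument: one greedily constructs $J_\epsilon$ index by index while maintaining an upper bound on the lower-barrier potential $\Phi(A_k,\ell_k):=\tr((A_k-\ell_kI)^{-1})$ and advancing the barrier level $\ell_k$ by a fixed positive increment whose size is controlled by $\|T\|^2$, until $|J_\epsilon|$ reaches the desired value and $\ell_k\ge\cc(\epsilon)$; by construction, the smallest nonzero eigenvalue of $\sum_{j\in J_\epsilon}(Te_j)(Te_j)^*$ then exceeds $\ell_k$, which is equivalent to (2).
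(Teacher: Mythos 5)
First, note that the paper does not prove Theorem \ref{theorem:RestrictedInvertTheoremFinite1} at all: it is quoted from Bourgain--Tzafriri as refined in the cited works of Casazza and Vershynin, and the only argument the paper supplies (in the appendix) is the easy reduction of the variant with $\|Te_i\|\ge \uu$ to this unit-norm version via the normalized operator $Se_i = Te_i/\|Te_i\|$. You are therefore attempting something strictly harder than anything the paper actually does.

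Your reduction of (2) to the statement $\lambda_{\min}(G_{J_\epsilon})\ge \cc(\epsilon)$ for the principal submatrix of $G=T^*T$ is correct, and the Bernoulli selection set-up is the right opening move (modulo the small point that the Chebyshev bound is only informative when $n$ is large relative to $\|T\|^2/\epsilon^2$; small $n$ must be handled separately, e.g.\ by taking $J_\epsilon$ a singleton). But what you have written is a plan, not a proof: you correctly identify the decisive estimate $E\|D_\delta(G-I)D_\delta\|\le f(\tau\|T\|^2)$, correctly observe that Khintchine/Rudelson-type bounds lose a $\sqrt{\log n}$ factor that would destroy the dimension-free constant $\cc(\epsilon)$, and then leave that estimate unproven, merely naming the Grothendieck-based two-stage selection that resolves it. The same applies to your alternative route: the barrier argument is only described in outline. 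To make it a proof you must state and prove the one-step lemma --- that whenever the lower-barrier potential $\Phi(A_k,\ell_k)=\tr\big((A_k-\ell_k I)^{-1}\big)$ (computed on the span of the vectors chosen so far) is under control, there exists an unchosen index $j$ for which $A_k+(Te_j)(Te_j)^*$ admits a barrier shift $\ell_{k+1}=\ell_k+\delta$ without increasing the potential; this existence comes from an averaging argument over the remaining indices using $\sum_i (Te_i)(Te_i)^*=TT^*$, $\tr TT^*=n$ and $\|TT^*\|=\|T\|^2$, and it is exactly where the ratio $(1-\epsilon)/\|T\|^2$ and the value of $\cc(\epsilon)$ are produced. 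You should also note that the smallest nonzero eigenvalue of $\sum_{j\in J}(Te_j)(Te_j)^*$ equals $\lambda_{\min}(G_J)$ only when $\{Te_j\}_{j\in J}$ is linearly independent; the barrier argument does guarantee this, but the equivalence as you state it is not unconditional. As it stands, the proposal has a genuine gap at its center in either of the two routes it proposes.
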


Throughout this paper, $\|\cdot\|$ represents the Hilbert space norm on vectors and the operator norm for operators acting on Hilbert spaces.

In our proofs we will need a minor extension of Theorem
\ref{theorem:RestrictedInvertTheoremFinite1} which is stated and
proved in the appendix (see Theorem~\ref{theorem:RestrictedInvertTheoremFinite}).
It is easily seen that (1) is best possible in Theorem
\ref{theorem:RestrictedInvertTheoremFinite1}.  Letting $Te_{2i}=e_i = Te_{2i-1}$
for $i=1,2,\ldots, n$ in $\ell_2^{2n}$, we see that $1/\|T\|^2$ is necessary.
In \cite{CP} it is shown that the class of equal norm Parseval frames
$\{f_i\}_{i=1}^{2n}$ in $\ell_2^n$ are not {\em 2-pavable}.  In the
current setting, this says that Theorem
\ref{theorem:RestrictedInvertTheoremFinite1} (1) fails if $\epsilon = 0$.

In their paper \cite{BT}, Bourgain and Tzafriri raised the question of
a possible infinite dimensional version of their theorem.  They then
gave a weakened version of this for the special
case of families of exponentials.  Vershynin \cite{V3} proves an
infinite dimensional restricted invertibility theorem for restrictions
of exponentials to subsets of the torus.

In this paper, we will use the notion of {\em density} from frame theory
to give a precise definition for infinite dimensional restricted invertibility.
We then prove a very general theorem on restricted invertibility for
classes of Bessel systems which are $\ell_1$-localized with respect to
frames.  As a consequence, we obtain the general restricted invertibility
theorem for $\ell_1$-localized operators on arbitrary Hilbert spaces.
We apply our general results to  prove the restricted
invertibility theorem for Gabor systems with generator in the Feichtinger
algebra as well as for systems of Gabor molecules in the Feichtinger algebra.

Standard density theory requires an {\em index map} (see Section
\ref{Section:Notation}.) This can be problematic in some applications.
So we will introduce a new notion of density which is independent of
index maps and as a consequence should have much broader application
in the field.  We will then show that in the presence of {\em localization},
this form of density becomes equivalent to the standard form.

The notion of localization with respect to an orthonormal basis is not
usable in Gabor theory due to the Balian-Low Theorem \cite{G}.
This is why we have to move from {\em rectangular} coordinate systems
to {\em overcomplete} coordinate systems.  This leads us to introduce
a new concept of {\em relative density}, because there, the overcompleteness
of the coordinate system factors out.

The paper is organized as follows.  Section 2 contains the notation, the
first form of {\em density} and the statements of the fundamental results
in the paper.  Section 3 is a detailed discussion of {\em localization}
with a number of examples.  Here, we also introduce our new notion of
{\em density} which has the major advantage that it is independent
of index maps.  We then show its relationship to the standard {\em density}
and show that in the setting of $\ell_2$-localized frames, the two forms
of density are the same.  We also
restate our main results using the second notion of density.  Section 4 contains
the proof of the main results on restricted invertibility.  Section 5
addresses  the  restricted invertibility theorem for Gabor
systems and Section 6 is an
appendix containing some intermediate results used in this paper.

\section{Notation and statement of results}\label{Section:Notation}

Hilbert space frame theory has traditionally been used in
signal processing (see \cite{G}) but recently has also had a significant impact
on problems in pure mathematics, applied mathematics and engineering.
(See, for example, \cite{C1,CFTW,CT,Ch,Pfa07b} and their references.)

\begin{definition}\label{definitionframe}
A family of vectors $\{f_i\}_{i\in I}$ in a Hilbert space $\H$ is called
a {\bf frame} for $\H$ if there are constants $0<A\le B<\infty$
(called {\bf lower and upper frame bounds} respectively) if
\[
A\|f\|^2 \le \sum_{i\in I}|\langle f,f_i\rangle|^2 \le B \|f\|^2,
\ \ \mbox{for all $f\in \H$}.\]
\end{definition}
If we only have the right hand side inequality, we call the family
a {\bf Bessel sequence} with {\bf Bessel bound} $B$.
If we can choose $A=B$ in Definition \ref{definitionframe}, then we say the frame is
{\bf tight} with tight frame bound $A$.  If $A=B=1$, it is a
{\bf Parseval frame}.  The {\bf analysis operator}
$T:\H \rightarrow \ell_2(I)$ of the
frame $\{f_i\}_{i\in I}$ is defined by
\[ T(f) = \sum_{i\in I} \langle f,f_i\rangle e_i,\]
where $\{e_i\}_{i\in I}$ is the unit vector basis of $\ell_2(I)$.
The adjoint of $T$ is the {\bf synthesis operator} given by
\[ T^{*}(e_i) = f_i, \ \ \mbox{for all $i\in I$}.\]
The {\bf frame operator} is the positive, self-adjoint, invertible
operator $S:\H \rightarrow \H$ where $S=T^{*}T$.  That is, for
all $f\in \H$,
\[ S(f) = \sum_{i\in I}\langle f,f_i\rangle f_i.\]
{\bf Reconstruction} of $f\in \H$ comes from
\[ f = \sum_{i\in I}\langle f,f_i\rangle S^{-1}(f_i).\]
The family $\{S^{-1}(f_i)\}_{i\in I}$ is also a frame for $\H$ called
the {\bf dual frame} of $\{f_i\}_{i\in I}$.

A family of vectors $\{f_i\}_{i\in I}$ in $\H$ is called a {\bf Riesz
sequence} with {\bf Riesz bounds} $0<A\le B <\infty$ if for all families of
scalars $\{a_i\}_{i\in I}$ we have
\[ A \sum_{i\in I}|a_i|^2 \le \|\sum_{i\in I}a_if_i\|^2 \le
B \sum_{i\in I}|a_i|^2.\]

We will use the notion of {\em density} from frame theory to
give the correct formulation of restricted invertibility for
infinite dimensional Hilbert spaces.  In the following section we will
define the previously mentioned new notion of density which does not require an index map
and then show that for $\ell_2$-localized frames, the two notions
of density are equivalent,  a result which is interesting in itself.

Over the last few years, a
considerable amount of work has been done on {\em density theory}.
We refer the reader to
\cite{BCHZ, BCHZ1, BCHZ2, BCZ} for the latest developments.
The common notions on density involve countable point sets in $\sigma$-finite
discrete measure spaces. We follow this approach and, throughout the paper,
$I$ will denote a countable index set and $G$ will
denote a finitely generated Abelian group
$G=\Z^{d_1}\times \Z_{N_1}\times \cdot \times  \Z_{N_{d_2}}$ with $d_1,d_2\in\N$ and $\Z_N=\{0,1,2,\ldots,N-1\}$ being the
cyclic group of order $N$.

\begin{definition}\label{definition:density}
Let $I$ be a set and
$a:I\longrightarrow G$ (called a {\bf localization map}).
 For $J\subseteq I$, the {\bf lower and upper density} of $J$
with respect to $a$ are given, respectively, by
   \begin{eqnarray}
     D^-(a;J)&=&\liminf_{R\to\infty} \inf_{k\in G} \frac{|a^{-1} (B_R(k) )
\cap J|}{ |B_R(0)|} , \label{equation:lowerdensity}\\
     D^+(a;J)&=&\limsup_{R\to\infty} \sup_{k\in G} \frac{|a^{-1} (B_R(k) )
\cap J|}{ |B_R(0)|}, \label{equation:lowerdensity}
   \end{eqnarray}
where $|\cdot |$ denotes the cardinality of the set and
\[ B_R(k)=\{g\in G\,;\,\|g-k\|_{\infty} = \max_{1\le j\le d_1+d_2}|g(j)-k(j)|\leq R\} \]
 is the box of radius $R$ and
center $k$ in $G$.
Note that $|B_R(k)|=|B_R(k')|$ for all $k, k'\in G$ and $R>0$.

If $D^-(a;J)=D^+(a;J)$, then we say that $J$ is of {\bf uniform density}
and write $D(a;J)=D^-(a;J)=D^+(a;J)$.
\end{definition}

\begin{remark} \rm In the case that $I=G$ and $a= id$, we write the lower
and upper density as $D^{-}(J), D^{+}(J)$ and these are called the
{\bf Beurling densities} of $J$ \cite{BCHZ,BCHZ1,BCHZ2,G}.
\end{remark}

The dependence of  $D^-(a;J)$ and $D^-(a;J)/D^-(a;I)$
 on $a$  is
illustrated in the following example.
\begin{example}\label{example:densitydependsona}\rm
Let $I=G=\mathbb Z$ and $J=2\mathbb Z$.
\begin{enumerate}
  \item For $a={\rm id}$, we have $D^-(a;J)/D^-(a;I)=\frac {1/2} 1=\frac 1 2$.
  \item For $a=2{\rm id}$ we have $D^-(a;J)/D^-(a;I)=\frac {1/4}
{1/2}= \frac 1 2$.
  \item For $a$ bijective with even numbers mapping bijectively to
$\mathbb Z\setminus 4\mathbb Z$ and odd numbers to $4\mathbb Z$, we have
$D^-(a;J)/D^-(a;I)=\frac {3/4} {1}=\frac 3 4$.
\end{enumerate}
\end{example}

Nonetheless, this dependence on $a$ will not introduce ambiguity when combined
with standard localization notions from frame theory
(see, for example, \cite{BCHZ, BCHZ1, BCHZ2, BCZ}).

\begin{definition}\label{definition:localization}Let $p=1$ or $p=2$. Let $a:I\longrightarrow G$,
 and let $\G = \{g_k: k\in G\}$ be a frame for $\H$ and $\F=\{f_i\}_{i\in I}
\subseteq \H$.  We say that $(\F,a,\G)$ is $\ell_p${\bf -localized}  if there
exists
   $r\in \ell_p(G)$ with $ |\langle f_i, g_{k'}\rangle |\leq r(k)$ whenever
$a(i)-k'=k$. Also, $\G = \{g_k: k\in G\}$ is $\ell_p${\bf -self-localized} if
$(\G,{\rm id},\G)$ is $\ell_p$-localized.

The operator $T:\H'\longrightarrow \H$ is $\ell_p$-{\bf localized} if there
exists an orthonormal basis $\E$ of $\H'$ indexed by $I$, a frame $\G$ of $\H$ indexed by the finitely generated Abelian group $G$, and a map $a:I\longrightarrow G$ so that
 so that $(T(\E),a,\G)$ is $\ell_p$-localized.
\end{definition}

As discussed in detail in Section~\ref{section:localization}, given $\F$ and
 $\G$, $D^-(a;J)$ and $D^+(a;J)$ do not depend on the choice of $a$ as
long as $(\F,a,\G)$ is $\ell_2$-localized.

We can now state the main results of the paper.  The first is the
frame theoretic form of restricted invertibility.

\begin{theorem}\label{theorem:MainForRiesz} Let $\cc$ be the function
provided in Theorem~\ref{theorem:RestrictedInvertTheoremFinite}.
Let $\F=\{f_i\}_{i\in I}$, $\|f_i\|\geq \uu>0$ for all $i\in I$,  be a Bessel system with Bessel bound $B$ in a Hilbert space $\H$. Let $G$ be a finitely generated Abelian group and
assume either\\[.1cm]

\noindent (A) $\G = \{g_k: k\in  G\}$ is a Riesz basis for $\H$ with Riesz
bounds $A,B$,

or

\noindent (B) $\G = \{g_k: k\in G\}$ is a frame for $\H$ with
$\ell_1$- self-localized dual frame
$\widetilde \G = \{\widetilde g_k: k\in G\}$.\\[.1cm]

\noindent Let $a:I \rightarrow G$ be a localization map with
$
    0<D^-(a;I)\leq D^+(a;I)<\infty.
$
If $(\F,a,\G)$ is $\ell_1$-localized, then for every
$\epsilon>0$ and $\delta>0$ there is a subset $ J=J_{\epsilon\delta} \subseteq I$
of uniform density satisfying \\[.1cm]

\noindent (1)  \quad $\displaystyle
  \frac{D(a;J)}{ D^{-}(a;I)}
\ge \frac{(1-\epsilon)\uu^2}{B}$,\\[.1cm]


\noindent (2) For all scalars $\{b_j\}_{j\in J}$ we have
\[  \|\sum_{j\in J}b_j f_j\|^2 \ge \cc(\epsilon)
(1-\delta) \frac A B\, \uu^2\, \sum_{j\in J}|b_j|^2
\] 
with $A=B$ in the case of (B).
%

\end{theorem}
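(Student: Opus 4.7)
The plan is to reduce to the finite restricted invertibility theorem (Theorem~\ref{theorem:RestrictedInvertTheoremFinite}) on large boxes in $G$, patch the local selections into a global set $J\subseteq I$, and verify that both the density lower bound (1) and the Riesz lower bound (2) survive the patching.

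\textbf{Step 1: truncation to boxes.} Fix a large radius $R$ and a buffer $M$ (to be chosen). For each center $k\in G$ let $I_{R,k}=a^{-1}(B_R(k))$ and let $P_{R,k}$ be the orthogonal projection onto $V_{R,k}:=\overline{\mathrm{span}}\{g_{k'}:k'\in B_{R+M}(k)\}$. In case (A) one uses the Riesz expansion $f_i=\sum_{k'} c_{k'}(i)\,g_{k'}$; the $\ell_1$-localization bound $|\langle f_i,g_{k'}\rangle|\le r(a(i)-k')$ and equivalence of Riesz coefficients to inner products force $\|f_i-P_{R,k}f_i\|\le\eta(M)$ uniformly for $i\in I_{R,k}$, with $\eta(M)\downarrow 0$ as $M\to\infty$. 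In case (B) one expands via the $\ell_1$-self-localized dual $\widetilde\G$; a Schur convolution of the two $\ell_1$ kernels yields the same truncation estimate. Thus each $f_i$ with $i\in I_{R,k}$ lies, up to error $\eta(M)$, in a subspace of dimension $\le|B_{R+M}(0)|$.

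\textbf{Step 2: local application of the finite theorem.} The truncated family $\{P_{R,k}f_i\}_{i\in I_{R,k}}$ has Bessel bound at most $B+O(\eta(M))$ and each vector has norm at least $\uu-\eta(M)$. Theorem~\ref{theorem:RestrictedInvertTheoremFinite} yields $J_{R,k}\subseteq I_{R,k}$ of cardinality at least $(1-\epsilon/2)\uu^{2}|I_{R,k}|/B$ on which the projected vectors form a Riesz sequence with lower bound $\cc(\epsilon/2)$. A Neumann-series perturbation (using $\eta(M)$ small) then upgrades this to a Riesz lower bound $\cc(\epsilon)(1-\delta)(A/B)\uu^{2}$ for the original $\{f_j\}_{j\in J_{R,k}}$; the factor $A/B$ enters in case (A) when converting coefficient norms to Hilbert norms and equals $1$ in case (B) since $A=B$.

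\textbf{Step 3: patching with density control.} Tile $G$ by disjoint boxes $B_R(k_\alpha)$ with centers on a sublattice whose step exceeds $2(R+M)$, and set $J=\bigcup_\alpha J_{R,k_\alpha}$. Cross-block Gram entries $\langle f_j,f_{j'}\rangle$ for $j\in J_{R,k_\alpha},\ j'\in J_{R,k_\beta},\ \alpha\ne\beta$ are dominated, through Step 1, by $(r*r)(a(j)-a(j'))$; since the centers are separated by more than $2(R+M)$ the total perturbation is $o(1)$ as $R,M\to\infty$, which gives the global Riesz bound (2). For (1), each tile contributes at least $(1-\epsilon)\uu^{2}/B$ of the points in $a^{-1}$ of that tile, so $D^{-}(a;J)\ge (1-\epsilon)\uu^{2}D^{-}(a;I)/B$; uniformity $D^{-}(a;J)=D^{+}(a;J)$ is enforced by choosing $R$ along a sequence realizing both $\liminf$ and $\limsup$ in Definition~\ref{definition:density} simultaneously and, if necessary, discarding elements of $J_{R,k_\alpha}$ in over-dense tiles.

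\textbf{Main obstacle.} The delicate point is the global Riesz lower bound: it cannot be obtained box-by-box, but only after showing that cross-block Gram interactions are summable and small. This is exactly where $\ell_1$-localization (and in case (B), $\ell_1$-self-localization of $\widetilde\G$) is essential, since $\ell_2$-localization would yield only Hilbert--Schmidt type bounds, insufficient to beat the per-block lower Riesz constant $\cc(\epsilon)(1-\delta)(A/B)\uu^{2}$.
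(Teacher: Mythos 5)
Your overall architecture (truncate to boxes, apply the finite Restricted Invertibility Theorem block by block, patch) is the same as the paper's, but Step~3 contains a genuine gap in case (A). You claim that the cross-block Gram entries $\langle f_j,f_{j'}\rangle$ are dominated by $(r*r)(a(j)-a(j'))$ and hence that the off-diagonal interaction is $o(1)$. Writing $f_j=\sum_n\langle f_j,g_n\rangle\widetilde g_n$, this inner product equals $\sum_{n,n'}\langle f_j,g_n\rangle\overline{\langle f_{j'},g_{n'}\rangle}\langle\widetilde g_n,\widetilde g_{n'}\rangle$, so your domination requires decay of $\langle\widetilde g_n,\widetilde g_{n'}\rangle$ off the diagonal, i.e.\ $\ell_1$-self-localization of the dual system. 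That is exactly the hypothesis of case (B); for a general Riesz basis as in case (A) it fails (take $\G=U\E$ for a bounded invertible $U$ with no off-diagonal decay: then $\langle\widetilde g_n,\widetilde g_{n'}\rangle=\langle(UU^*)^{-1}e_n,e_{n'}\rangle$ need not decay), and two blocks that are far apart in index can still have a nontrivial mutual angle. If your $o(1)$ claim were true, case (A) would yield the lower bound $\cc(\epsilon)(1-\delta)\uu^2$ with no loss, contradicting the presence of the factor $A/B$ in the statement. The paper's proof handles case (A) differently: it truncates each $f_i$ to $f_{iQ}=\sum_{\|a(i)-n\|_\infty<Q}\langle f_i,g_n\rangle\widetilde g_n$ and shrinks each local selection so that the truncated vectors from distinct blocks lie in the spans of \emph{disjoint} subsets of the Riesz basis $\widetilde\G$; the global lower Riesz bound then follows from the partition inequality for Riesz sequences (Lemma~\ref{lemma:PartitionRiesz}), which is precisely where the multiplicative loss $A/B$ enters --- not, as you suggest, from a per-block conversion of coefficient norms. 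Only in case (B) does the paper argue as you do, estimating $\sum_{k\neq k'}\langle x_k,x_{k'}\rangle$ by a Schur bound on the matrix $(\langle\widetilde g_n,\widetilde g_{n'}\rangle)$ restricted to $\|n-n'\|_\infty>R'$.

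Two smaller points. In Step~1 you project onto $\overline{\mathrm{span}}\{g_{k'}:k'\in B_{R+M}(k)\}$, but $\ell_1$-localization controls $\langle f_i,g_{k'}\rangle$, which are the coefficients of $f_i$ in the \emph{dual} expansion; the natural truncation (and the one the paper uses) therefore lands in the span of the $\widetilde g_{k'}$, and your appeal to ``equivalence of Riesz coefficients to inner products'' does not supply decay of $\langle f_i,\widetilde g_{k'}\rangle$. Also, applying the finite theorem with parameter $\epsilon/2$ gives the per-block constant $\cc(\epsilon/2)$, which need not exceed $\cc(\epsilon)(1-\delta)$; one must instead use continuity and monotonicity of $\cc$ to pick $\epsilon'<\epsilon$ with $\cc(\epsilon')(1-\delta/2)\ge\cc(\epsilon)(1-\delta)$, as the paper does.
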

%

A special case of Theorem~\ref{theorem:MainForRiesz} is the
restricted invertibility theorem (as envisioned by Bourgain and Tzafriri)
for $\ell_1$-localized operators on infinite dimensional
Hilbert spaces. In fact, for an orthonormal basis $\E=\{e_i\}_{i\in I}$ in $\H'$ and a bounded operator $T:\H'\longrightarrow \H$, $\{Te_i\}_{i\in I}$ is Bessel with optimal Bessel bound $\|T\|^2$.

The reader may substitute
$\Z$ or even $\N$ for the finitely generated Abelian group $G=\Z^d\times H$, $H$ finite Abelian, in the theorem below.

\begin{theorem}[{\bf Infinite Dimensional Restricted Invertibility Theorem}]
\label{theorem:RestrictedInvertTheoremInfinite}

Let $\{e_k\}_{k\in G}$ and $\G =\{g_k\}_{k\in G}$ be orthonormal bases for a
Hilbert space $\H$,
$T:\H \rightarrow \H$ be a bounded linear operator satisfying
$\|Te_k\| = 1$ for all $k\in G$ and
$\F=T(\G)$. Let $a:G \rightarrow G$ be a one to
one map and assume that $(\F,a,\G)$ is $\ell_1$-localized.  Then for all $\epsilon,\delta >0$,
there is a subset $J=J_{\epsilon\delta}\subseteq G$ of uniform density so
that
(with $\cc$ being the function provided in
Theorem~\ref{theorem:RestrictedInvertTheoremFinite}),
\\[.1cm]

\noindent (1) $\displaystyle  D(a;J)
\ge \frac{1-\epsilon}{\|T\|^2}$, \\[.1cm]
\vskip12pt

\noindent (2) For all $\{b_j\}_{j\in J}\in \ell_2(J)$ we have
\vskip12pt
\[ \|\sum_{j\in J}b_jTe_j\|^2
\ge \cc(\epsilon) (1-\delta) \sum_{j\in J}|b_j|^2.\]
\end{theorem}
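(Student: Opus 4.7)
The plan is to derive this theorem as a direct specialization of Theorem~\ref{theorem:MainForRiesz} in case (A), taking $I=G$, $\F=\{Te_k\}_{k\in G}$, and $\G=\{g_k\}_{k\in G}$ itself as the Riesz basis of $\H$.

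I would first verify the hypotheses of the Main Theorem with these choices. The system $\{Te_k\}$ is Bessel with Bessel bound $\|T\|^2$: for every $f\in\H$,
\[
\sum_{k\in G}|\langle f,Te_k\rangle|^2 = \sum_{k\in G}|\langle T^*f,e_k\rangle|^2 = \|T^*f\|^2 \le \|T\|^2\|f\|^2.
\]
The assumption $\|Te_k\|=1$ supplies the uniform lower bound $\uu=1$. Since $\G$ is an orthonormal basis of $\H$, it is in particular a Riesz basis with both Riesz bounds equal to $1$, so we are in case (A) with $A=B=1$. The $\ell_1$-localization of $(\F,a,\G)$ is assumed directly in the statement.

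For the density hypothesis $0<D^-(a;G)\le D^+(a;G)<\infty$, I would observe that since $a:G\to G$ is one-to-one, $|a^{-1}(B_R(k))|\le |B_R(k)|$ and therefore $D^+(a;G)\le 1$. Interpreting ``one to one'' as bijective (as is needed to force $D^-=1$, in view of Example~\ref{example:densitydependsona}(2), where a merely injective $a$ can have density $\tfrac12$), one concludes $D(a;G)=1$. This is the only delicate point I would pause to pin down precisely, and it is the closest thing to a real obstacle in the argument; everything else is bookkeeping.

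With these identifications, Theorem~\ref{theorem:MainForRiesz}(A) yields for every $\epsilon,\delta>0$ a uniform-density subset $J=J_{\epsilon\delta}\subseteq G$ satisfying
\[
D(a;J) = \frac{D(a;J)}{D^-(a;G)} \ge \frac{(1-\epsilon)\uu^2}{\|T\|^2} = \frac{1-\epsilon}{\|T\|^2},
\]
which is (1), and, since the ratio $A/B$ in the Main Theorem refers to the Riesz bounds of $\G$ (both equal to $1$),
\[
\Big\|\sum_{j\in J}b_j Te_j\Big\|^2 \ge \cc(\epsilon)(1-\delta)\,\uu^2\sum_{j\in J}|b_j|^2 = \cc(\epsilon)(1-\delta)\sum_{j\in J}|b_j|^2,
\]
which is (2). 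Thus the infinite dimensional restricted invertibility theorem falls out immediately once Theorem~\ref{theorem:MainForRiesz} is in hand; the substantive content is located entirely there.
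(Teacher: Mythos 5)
Your proposal is correct and matches the paper's own route exactly: the paper obtains Theorem~\ref{theorem:RestrictedInvertTheoremInfinite} precisely as the specialization of Theorem~\ref{theorem:MainForRiesz}, case (A), with $\G$ orthonormal (Riesz bounds $A=B=1$), $\uu=1$, and Bessel bound $\|T\|^2$ for $\{Te_k\}$. Your reading of ``one to one'' as bijective, giving $D^-(a;G)=D^+(a;G)=1$, is the correct one — for a merely injective $a$ (e.g.\ $a(k)=2k$ with $Te_k=g_{2k}$) the claimed density bound in (1) would fail — and flagging this is a worthwhile clarification of a point the paper leaves implicit.
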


Theorem \ref{theorem:RestrictedInvertTheoremInfinite} is best possible in the
sense that the theorem fails in general if $\epsilon =0$ in (1).  This follows easily
from the corresponding finite dimensional result discussed after Theorem
\ref{theorem:RestrictedInvertTheoremFinite1}.

The density concepts outlined above were developed in part
to obtain sophisticated results on the
density of Gabor frames for $L^2(\mathbb R^d)$ \cite{BCHZ1,BCHZ2,
BCZ,G}.

For  $\lambda=(x,\omega)\in \R^{2d}$ we define {\bf modulation by }$\omega$  $M_\omega$ and
{\bf translation by} $x$  $T_x$ on $L^2(\R^d)$ by
\[ M_\omega (\varphi)(\cdot) = e^{2\pi i\omega \cdot} \varphi (\cdot) ,\ \ T_x (\varphi)(\cdot) = \varphi(\cdot -x),\quad \varphi \in L^2(\R^d)\]
For $\varphi \in L^2(\mathbb R^d)$ and $\Lambda\subseteq \mathbb R^{2d}$
discrete, we consider the set $(\varphi,\Lambda)=\{\pi(\lambda)\varphi\}_{\lambda \in \Lambda}
\subseteq  L^2(\mathbb R^d)$ where
$\pi(\lambda)\varphi=\pi(x,\omega)\varphi =M_\omega T_x \varphi$, $\lambda=(x,\omega) \in\mathbb R^{2d}.$
The set  $(\varphi,\Lambda)$ is called {\bf Gabor system} with generating function
$\varphi$, and if  $(\varphi,\Lambda)$ is a frame for
$L^2(\mathbb R^d)$, then we call
$(\varphi,\Lambda)$ a {\bf Gabor frame}

The {\bf Feichtinger algebra}
is given by
$$S_0(\mathbb R^d)= \big\{ f\in L^2(\mathbb R^d):\ \ \langle f,
\pi(\cdot) g_0\rangle \in L^1(\mathbb R^{2d})\big\},$$
with $\varphi_0$ being a Gaussian \cite{G}.
Theorem~\ref{theorem:GaborExample2} in Section~\ref{section:Gabor} is
Theorem~\ref{theorem:MainForRiesz} applied to time--frequency molecules.
In terms of  Gabor frames and the lower Beurling density
$D^-(\Lambda)$ (respectively uniform Beurling density
$D(\Lambda)$), if $\Lambda\subseteq \R^{2d}$,  it reduces
to the following result.

\begin{theorem}\label{theorem:GaborExample1} Let $\epsilon,\delta>0$. Let
$\varphi \in S_0(\R)$ and let the Gabor system $(\varphi,\Lambda)$ have Bessel
bound $B<\infty$. Then exists a set $\Lambda_{\epsilon\delta}
\subseteq \Lambda$, of uniform density, so that\\[.1cm]

\noindent (1) \quad $\displaystyle
\frac{D(\Lambda_{\epsilon\delta})}{D^-(\Lambda)}
\geq \frac{(1-\epsilon)}
B\|\varphi\|^2$, \\[.1cm]
\vskip12pt
\noindent (2)  For all $\{b_\lambda\}_{\lambda
\in \Lambda}\in \ell_2(\Lambda)$,
\vskip12pt
\[ \|\sum_{\lambda \in \Lambda_{\epsilon\delta}}
b_\lambda \pi (\lambda) \varphi \|^2
\ge \cc(\epsilon) (1-\delta) \|\varphi\|
\sum_{\lambda \in \Lambda_{\epsilon\delta}}|b_\lambda|^2\]
\end{theorem}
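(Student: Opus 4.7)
The plan is to deduce Theorem~\ref{theorem:GaborExample1} as a direct specialization of Theorem~\ref{theorem:MainForRiesz}. I take $I = \Lambda$ and $\F = \{\pi(\lambda)\varphi\}_{\lambda\in\Lambda}$; since each $\pi(\lambda)$ is unitary, $\|\pi(\lambda)\varphi\| = \|\varphi\|$ for every $\lambda$, so I may set $\uu = \|\varphi\|$. Fix a lattice $\alpha\Z \times \beta\Z \subset \R^2$ with $\alpha\beta$ small enough that, for some auxiliary window $\psi_0 \in S_0(\R)$, the Gabor system $\G = \{g_k\}_{k\in\Z^2}$ defined by $g_{(m,n)} = \pi(\alpha m, \beta n)\psi_0$ is a frame for $L^2(\R)$. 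By the Feichtinger--Gr\"ochenig--Leinert theorem the canonical dual window again lies in $S_0(\R)$, so $\widetilde{\G}$ is $\ell_1$-self-localized; we are therefore in case (B) of Theorem~\ref{theorem:MainForRiesz}, where $A = B$.

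Define the localization map $a:\Lambda \to \Z^2$ by $a(x,\omega) = (\lfloor x/\alpha \rfloor, \lfloor \omega/\beta \rfloor)$. To verify $\ell_1$-localization of $(\F, a, \G)$, I would use the identity
\[
   |\langle \pi(\lambda)\varphi,\, g_{k'} \rangle| = |V_{\psi_0}\varphi(\lambda - (\alpha k'_1, \beta k'_2))|
\]
(up to a unimodular phase), together with the fact that for $\varphi, \psi_0 \in S_0(\R)$ the short-time Fourier transform $V_{\psi_0}\varphi$ belongs to the Wiener amalgam space $W(L^\infty, \ell^1)(\R^2)$. Setting
\[
   r(k) = \sup\bigl\{\, |V_{\psi_0}\varphi(z)| : z \in [\alpha(k_1{-}1), \alpha(k_1{+}1)]\times[\beta(k_2{-}1), \beta(k_2{+}1)]\, \bigr\}
\]
produces $r \in \ell^1(\Z^2)$ dominating $|\langle \pi(\lambda)\varphi, g_{k'}\rangle|$ whenever $a(\lambda) - k' = k$, uniformly in $\lambda$.

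Finally, compare the two density notions. A box $B_R(k) \subseteq \Z^2$ has $(2R+1)^2$ elements, and $a^{-1}(B_R(k))$ is the intersection of $\Lambda$ with a Euclidean box of area $\alpha\beta(2R+1)^2$, giving $D^{\pm}(a;J) = \alpha\beta\, D^{\pm}(J)$ for every $J \subseteq \Lambda$. Hence $D(a;J)/D^-(a;\Lambda) = D(J)/D^-(\Lambda)$, and conclusions (1)--(2) of Theorem~\ref{theorem:MainForRiesz} case (B) translate directly into conclusions (1)--(2) of Theorem~\ref{theorem:GaborExample1}. The principal obstacle is the second step: producing the dominant $\ell^1$ sequence $r$ uniformly in $\lambda$. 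This is precisely where the Feichtinger-algebra hypothesis does the essential work, via the $W(L^\infty,\ell^1)$-membership of the STFT of any $S_0$-function; the remaining steps amount to a dictionary translation between the abstract frame-theoretic setting of Theorem~\ref{theorem:MainForRiesz} and the Gabor-analytic language of Theorem~\ref{theorem:GaborExample1}.
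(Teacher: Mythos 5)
Your proposal is correct and follows essentially the same route as the paper: the paper also specializes Theorem~\ref{theorem:MainForRiesz} case (B) by taking a reference Gabor frame $\G$ generated by an $S_0$ window over a lattice (the paper uses $\tfrac12\Z^{2}$ with a \emph{tight} frame, so $\ell_1$-self-localization of the dual is immediate without invoking the Feichtinger--Gr\"ochenig--Leinert theorem), defines $a$ by snapping $\lambda$ to the nearest lattice point, gets $\ell_1$-localization from the $\ell^1$-summability of the short-time Fourier transform of $S_0$ functions as in \cite{BCHZ2}, and converts between $D^{\pm}(a;\cdot)$ and Beurling density by the covolume factor exactly as you do.
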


Note that Theorem~\ref{theorem:GaborExample1} (2) states
that $(\varphi,\Lambda_{\epsilon\delta})$  is a Riesz sequence with lower
Riesz bound $\cc(\epsilon)(1-\delta)\|\varphi\|$.  That is, the lower
Riesz bound of $(\varphi,\Lambda_{\epsilon\delta})$ depends only on
$\epsilon$, $\delta$, and $\|g\|$, but not on any geometric properties
of $\Lambda$ or other specifics of $g$. Certainly, such properties of $g$ and $\Lambda$ affect the Bessel bound of $(\varphi,\Lambda)$ and therefore (1) in   Theorem~\ref{theorem:GaborExample1}.
Moreover, note that if $(\varphi,\Lambda)$ is a tight frame,
then $D^-(\Lambda) =\frac{ B}{\|\varphi\|^2}$, and (1)
in Theorem~\ref{theorem:GaborExample1} becomes
simply \cite{BCHZ2} $$\displaystyle
 D(\Lambda_{\epsilon\delta})\geq (1-\epsilon).$$

 Balan, Casazza, and Landau \cite{BCZ} introduced some of the tools used here
to resolve an old problem in frame theory:  {\em What is the correct
quantitative measure for redundancy for infinite dimensional
Hilbert spaces?}
In \cite{BCZ}, the following complementary result to Theorem
\ref{theorem:GaborExample1} is obtained.

\begin{theorem}\label{theorem:densityframe}
  Let $\varphi \in S_0(\R)$ and let $(\varphi,\Lambda)$ be a Gabor frame.
Then exists a set $\Lambda_\epsilon \subseteq \Lambda$
so that $(\varphi,\Lambda_\epsilon)$  is still a frame,
while $$\displaystyle D^+(\Lambda_\epsilon)\leq 1+\epsilon.$$
\end{theorem}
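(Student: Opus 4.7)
The plan is to thin $\Lambda$ by a local selection procedure in the time--frequency plane and then glue the pieces via the $\ell_1$-localization afforded by $\varphi\in S_0(\R)$. The rough picture is: partition $\R^2$ into large cubes, in each cube reduce the number of Gabor atoms down to the Landau/Nyquist count of approximately $R^2$ atoms per cube of side $R$, and argue that the resulting union is still a frame for $L^2(\R)$ thanks to the integrable off-diagonal decay of the Gabor Gram matrix.

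First, since $\varphi\in S_0(\R)$ the short-time Fourier transform $V_\varphi\varphi$ lies in $L^1(\R^2)$, so $(\varphi,\Lambda)$ is $\ell_1$-self-localized in the sense of Definition~\ref{definition:localization}, and the canonical dual frame inherits the same localization. Fix $R\gg 1$ to be chosen in terms of $\epsilon$ and the $\ell_1$-tail of $V_\varphi\varphi$, partition $\R^2$ into cubes $Q_k$ of side $R$, and set $\Lambda_k=\Lambda\cap Q_k$. Inside each cube perform a finite-dimensional thinning of the Gram matrix $G_k=[\langle\pi(\lambda)\varphi,\pi(\mu)\varphi\rangle]_{\lambda,\mu\in\Lambda_k}$: by a Landau-type counting argument for time--frequency concentration, the effective rank of $G_k$ (number of eigenvalues above a fixed threshold) is at most $(1+\epsilon/2)R^2$ up to a boundary term of order $R$, so standard linear algebra produces $\Lambda_k'\subseteq\Lambda_k$ of cardinality $\leq(1+\epsilon/2)R^2$ whose atoms retain the range of the local synthesis operator, with a quantitative lower Riesz bound depending only on the frame bounds of $(\varphi,\Lambda)$ and on $R$.

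Set $\Lambda_\epsilon=\bigcup_k\Lambda_k'$. By construction $D^+(\Lambda_\epsilon)\leq 1+\epsilon/2<1+\epsilon$, so the density conclusion is immediate. For the frame property of $(\varphi,\Lambda_\epsilon)$, I would fix $f\in L^2(\R)$, introduce a smooth partition of unity $\{\chi_k\}$ in the time--frequency plane adapted to $\{Q_k\}$, decompose $V_\varphi f=\sum_k \chi_k V_\varphi f$, and bound $\sum_{\lambda\in\Lambda_\epsilon}|\langle f,\pi(\lambda)\varphi\rangle|^2$ from below by separating diagonal and off-diagonal contributions in $k$. The $\ell_1$-localization controls the cross-cube terms, so the global lower bound reduces, up to a tail correction that is small for $R$ large, to a sum of the local lower bounds from the previous step.

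The main obstacle is this gluing step: ensuring that the local selections are consistent enough for the global lower frame bound to survive. The cross-cube coupling $\sum_{\mu\notin Q_j}|\langle\pi(\lambda)\varphi,\pi(\mu)\varphi\rangle|$ for $\lambda$ near $\partial Q_j$ must be dominated by the local gain, which forces $R$ to be chosen large relative to the $\ell_1$-tail of $V_\varphi\varphi$. A Schur test applied to the global Gram matrix restricted to $\Lambda_\epsilon$, combined with the uniform local lower bounds from the thinning, then yields a positive lower frame bound depending only on $\epsilon$, the frame bounds of $(\varphi,\Lambda)$, and $\|V_\varphi\varphi\|_{L^1(\R^2)}$.
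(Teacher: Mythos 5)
First, a point of reference: this paper does not prove Theorem~\ref{theorem:densityframe} at all --- it is quoted from Balan, Casazza, and Landau \cite{BCZ} as a known complementary result, so there is no in-paper proof to compare against. The argument in \cite{BCZ} is of a quite different flavor from yours: it works with the quantities $\langle S^{-1}\pi(\lambda)\varphi,\pi(\lambda)\varphi\rangle$ averaged over boxes (a ``redundancy function'' for $\ell_1$-localized frames) and shows that whenever this average exceeds $1$ one can delete a subset of positive density while preserving the frame property, iterating down to upper density $1+\epsilon$.

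Judged on its own terms, your proposal has a genuine gap at the local thinning step, and it is exactly the step that carries all the difficulty. You ask for $\Lambda_k'\subseteq\Lambda_k$ with $|\Lambda_k'|\le(1+\epsilon/2)R^2$ ``whose atoms retain the range of the local synthesis operator'' with a uniform quantitative lower bound. But $\dim\operatorname{span}\{\pi(\lambda)\varphi\}_{\lambda\in\Lambda_k}=\operatorname{rank}G_k$ can be as large as $|\Lambda_k|$, which exceeds $(1+\epsilon/2)R^2$ precisely in the interesting case where $\Lambda$ is overcomplete; no subset of that cardinality can span the range. What your Landau-type eigenvalue count controls is the \emph{essential} rank (eigenvalues above a threshold), so at best $\Lambda_k'$ can reproduce the projection of $f$ onto the essential eigenspace of $G_k$. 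The global lower frame bound then requires showing that the components of $V_\varphi f$ lost to the discarded low-eigenvalue directions, summed over all cubes, are $o(\|f\|^2)$ --- this is the heart of the theorem and is not addressed by the Schur test, which only controls cross-cube coupling, not the in-cube loss. Relatedly, the proposal slides between a lower \emph{Riesz} bound for $\Lambda_k'$ (a synthesis-side statement about coefficients) and a lower \emph{frame} bound (an analysis-side statement about arbitrary $f$); a union of uniformly bounded Riesz sequences need not be a frame, or even complete. To make the gluing work one needs each $\{\pi(\lambda)\varphi\}_{\lambda\in\Lambda_k'}$ to be a frame for (essentially all of) $\operatorname{span}\{\pi(\lambda)\varphi\}_{\lambda\in\Lambda_k}$ with bounds uniform in $k$, and producing such a subset of near-critical cardinality is itself a nontrivial selection problem, not ``standard linear algebra.''
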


To prove results as Theorem~\ref{theorem:densityframe}  one has to  maintain
completeness  while removing large subsets from frames.  The challenge
when proving Theorem~\ref{theorem:MainForRiesz} is to obtain a given
lower Riesz bound while choosing as many elements as possible from a
Bessel system.

\section{Relative density and restricted invertibility}
\label{section:localization}

Definitions~\ref{definition:density} and \ref{definition:localization} are
based on the work
of Balan, Casazza, Heil, Landau \cite{BCHZ,BCHZ1,BCHZ2,BCZ}
(see also Gr\"ochenig \cite{G1}). They lead to a density concept
of subsets of $\F$ when $(\F,a,\G)$ is $\ell_1$-localized. The definition of
density of  $\F'\subseteq \F=\{f_i\}_{i\in I}$ relies on the localization
map $a:I\longrightarrow G$, as does the left hand side of (1) in Theorem~\ref{theorem:MainForRiesz}, while the right hand side of (1) in Theorem~\ref{theorem:MainForRiesz} does not depend on $a$.
In fact, as mentioned briefly in Section~\ref{Section:Notation}, in combination with
localized function systems though, $D^-(a;J)$ becomes independent of $a$.
This fact is well illustrated in the following example.
\begin{example}\rm
  Let $\E=\G=\{g_k\}_{k\in\Z}$ be an orthonormal basis of $\H$.
Let $T: \H\longrightarrow \H$ be defined by $Tg_k=g_{[\frac k 2]}$.
Let $\F=T\G$ and $a:\Z\mapsto \Z$ be so that
$$r(k)\geq |\langle f_{k''}, g_{a(k'')-k}\rangle|=|\langle g_{[\frac {k''} 2]},
g_{a(k'')-k}\rangle|=\delta([\tfrac {k''} 2]-a(k'')+k), \quad  k\in\Z.$$
Clearly, $r\in \ell_2(\Z)$ then implies $ [\frac {k''} 2]-a(k'')$, $k''\in \Z$,
is bounded.
Given $a_1$, $a_2$ with  $ [\frac {k''} 2]-a_1(k'')$, $k''\in \Z$, and
$ [\frac {k''} 2]-a_2(k'')$, $k''\in \Z$, bounded, then $a_1(k'')-a_2(k'')$,
$k''\in \Z$, bounded, and, clearly  $D^-(a_1;J)=D^-(a_2;J)$ for all
subsets $J\subseteq \Z$. (See Proposition~\ref{proposition:bounded} for a detailed argument.).
\end{example}

In general, for a family of functions $\F$ and a reference system  $\G$, each element  $f\in \F$ is naturally placed within $\G$ as the coefficient sequence $\{\langle f, g_k \rangle\}_k$  decays away from its {\em center of mass} as $\|k\|_\infty \rightarrow \infty$ by virtue of  $\{\langle f, g_k \rangle\}\in \ell_2(G)$. The function family $\F$ being $\ell_2$--localized with  respect to $\G$ simply means that  the decay behavior of $\{\langle f, g_k \rangle\}_k$  away from its {\em center of mass} is independent of $f\in \F$.

As each $f\in\F$ is local within the {\em coordinate system}  $\G$, an explicit location map $a:I\longrightarrow G$ is not needed. Localization and density of $\F$ with respect to $\G$ are fully determined by $\G$. To address this, we give a definition of localization and density which is independent of an explicit index set map $a:I\longrightarrow G$.

\begin{definition}\label{definition:ourlocalization} Let $p=1$ or $p=2$.
The  set  $\F\subseteq \H$ is $\ell_p$-{\bf localized} with respect to
$\G= \{g_k\}_{k\in  G}$ if there exists a  sequence
$r\in \ell_p(G)$ so that for each $f\in \F$ there is a $k\in G$
with $\langle f,g_n \rangle\leq r(n-k)$ for all $n\in G$.

The operator $T:\H'\longrightarrow \H$ is $\ell_p$-{\bf localized} if there
exists an orthonormal basis $\E$ of $\H'$ and a frame $\G$
of $\H$ so that $T(\E)$ is $\ell_p$-localized with respect to $\G$.
\end{definition}

Note, that any diagonalizable operator, for example, a  compact normal
operator on a separable Hilbert space is $\ell_1$-localized.

\begin{definition}\label{definition:ourdensity}
 The {\bf lower density} and {\bf upper density} of $\F$ with respect to
$\G$ are given, respectively, by
 \begin{eqnarray}
     D^-(\F;\G)&=&\liminf_{R\to\infty}
\inf_{k\in G} \frac{\sum_{f\in \F}a_f \sum_{n\in B_R(k)}
|\langle f,g_{n-k} \rangle|^2}{ |B_R(0)|} , \label{equation:lowerdensity}\\
    D^+(\F;\G)&=&\limsup_{R\to\infty}
\sup_{k\in G} \frac{\sum_{f\in \F} a_f \sum_{n\in B_R(k)}
|\langle f,g_{n-k} \rangle|^2}{ |B_R(0)|}, \label{equation:lowerdensity}
\end{eqnarray}
   where $a_f=\big(\sum_{n\in G} |\langle f,g_n \rangle|^2\big)^{-1}$,
$f\in \F$. If $D^-(\F;\G)= D^+(\F;\G)$,
then  $\F$ has {\bf uniform density}  $D(\F;\G)=D^-(\F;\G)= D^+(\F;\G)$ with
respect to $\G$ .
\end{definition}

Note that if $\mathcal \G$ is  a tight frame with upper and lower
frame bound $A$, then $a_f=(A\|f\|^2)^{-1}$ for  $f\in\F$. The following four propositions describe the relationship between Definitions~\ref{definition:density} and  \ref{definition:localization} and Definitions~\ref{definition:ourlocalization} and  \ref{definition:ourdensity}

\begin{proposition}\label{proposition:bounded} Let $p=1$ or $p=2$.
  If $(\F,a,\G)$ is $\ell_p$-localized, $\|f_i\|\geq \uu>0$ for all $i\in I$ and $\G$ is a frame, then  $(\F,b,\G)$ is $\ell_p$-localized if and only if $a-b:I\longrightarrow G$ is bounded.
\end{proposition}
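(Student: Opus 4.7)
The definition unpacks to the following: $(\F,c,\G)$ is $\ell_p$-localized precisely when there is an envelope $r_c \in \ell_p(G)$ with $|\langle f_i, g_{k'}\rangle| \le r_c(c(i)-k')$ for all $i\in I$ and $k'\in G$. The claim is an equivalence, so I would prove the two implications separately; the hypothesis that $(\F,a,\G)$ is $\ell_p$-localized is used in both directions.

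The direction \emph{$a-b$ bounded $\Rightarrow$ $(\F,b,\G)$ localized} is elementary. Suppose $\|a(i)-b(i)\|_\infty \le M$ for all $i$, let $r$ be an $\ell_p$-envelope for $(\F,a,\G)$, and set
\[
 s(k) \ = \ \max_{m\in B_M(0)} r(k+m), \qquad k\in G.
\]
Then $\|s\|_p^p \le |B_M(0)|\,\|r\|_p^p$, so $s\in \ell_p(G)$, and the identity $a(i)-k' = (b(i)-k') + (a(i)-b(i))$ with $a(i)-b(i)\in B_M(0)$ yields $|\langle f_i,g_{k'}\rangle| \le r(a(i)-k') \le s(b(i)-k')$, giving the required localization.

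For the direction \emph{$(\F,b,\G)$ localized $\Rightarrow$ $a-b$ bounded}, let $r,s\in \ell_p(G)\subseteq\ell_2(G)$ be envelopes for the $a$- and $b$-localizations, and let $A>0$ be the lower frame bound of $\G$. The frame inequality together with $\|f_i\|\ge\uu$ and both localization estimates gives, after substituting $m=a(i)-k'$ and writing $d_i=b(i)-a(i)$,
\[
 A\uu^2 \ \le \ \sum_{k'\in G} |\langle f_i,g_{k'}\rangle|^2 \ \le \ \sum_{m\in G} \min\bigl(r(m)^2,\,s(m+d_i)^2\bigr) \ =: \ \Phi(d_i).
\]
The key step is then to show $\Phi(d) \to 0$ as $\|d\|_\infty \to \infty$. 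Given $\epsilon>0$, pick a finite $F\subset G$ with $\sum_{m\notin F}r(m)^2<\epsilon/2$; for $\|d\|_\infty$ large enough the finitely many points $\{m+d:m\in F\}$ lie in the tail of $s$, whence $\sum_{m\in F}s(m+d)^2<\epsilon/2$. Splitting at $F$ gives $\Phi(d)<\epsilon$. Hence if $a-b$ were unbounded, we could choose $i$ with $\Phi(d_i)<A\uu^2$, contradicting the lower bound.

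The main obstacle is precisely the tail estimate $\Phi(d)\to 0$, and it is also where every hypothesis of the proposition is used: the frame property of $\G$ supplies $A>0$, the assumption $\|f_i\|\ge \uu>0$ prevents the lower bound from collapsing, and the inclusion $\ell_1\subseteq\ell_2$ for sequences lets the two exponents be handled uniformly by reducing everything to an $\ell_2$-concentration argument. The remainder of the proof is purely algebraic bookkeeping.
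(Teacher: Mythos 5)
Your proof is correct and follows essentially the same route as the paper: the forward direction is the same elementary envelope shift (the paper dismisses it as ``clearly''), and the converse is the same contradiction via the lower frame bound, with your tail estimate $\Phi(d)\to 0$ being exactly the paper's observation that $\sum_{k}\min\{r(k),r(k-n)\}^2\to 0$ as $\|n\|_\infty\to\infty$ (the paper uses a single common envelope where you carry two, an immaterial difference).
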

\begin{proof}
  If $a-b$ bounded, then clearly $(\F,b,\G)$ is $\ell_p$-localized.

  To see the converse, let us assume that $a-b$ is not bounded while $(\F,a,\G)$  and  $(\F,b,\G)$ are $\ell_p$-localized. Choose $r\in \ell_p(G)$ with $|\langle f_i,g_k\rangle|\leq r(a(i)-k),r(b(i)-k)$ for all $i\in I, k\in G$.  Observe that
  \begin{eqnarray*}
    \sum_{k\in G} \min\{ r(k) , r(k-n) \}^2\longrightarrow 0\quad \text{as}\quad \|n\|_\infty \to\infty.
  \end{eqnarray*}
  Let $A$ be the lower frame bound of $\G$. Choose $M$ so that $\sum_{k\in G} \min\{ r(k) , r(k-n) \}^2\leq \tfrac 1 2 A\uu^2$ for all $n$ with  $\|n\|_\infty \geq M$ and choose $i$ with $\|a(i)-b(i)\|_\infty \geq M$. Then
  \begin{eqnarray*}
         0<A\uu^2 &\leq&   A\|f_i\|^2\leq \sum_{k\in G} |\langle f_i, g_k\rangle|^2 \leq \sum_{k\in G} \min\{ r(a(i)-k) , r(b(i)-k) \}^2 \\
         &= & \sum_{k\in G} \min\{ r(k) , r(k-(a(i)-b(i)) \}^2 \leq \tfrac 1 2 A\uu^2,
  \end{eqnarray*}
  a contradiction.
\end{proof}

\begin{proposition}
  If  $a-b$ is bounded, then $D^-(J,a)=D^-(J,b)$ and $D^+(J,a)=D^+(J,b)$ for all $J\subseteq I$.
\end{proposition}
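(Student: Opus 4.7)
The plan is to reduce everything to a straightforward comparison of sublevel sets under $a$ and $b$, exploiting the fact that a bounded shift of the centering point $k$ in a box $B_R(k)$ is negligible relative to $|B_R(0)|$ once $R$ is large.

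First I would set $M=\sup_{i\in I}\|a(i)-b(i)\|_\infty<\infty$, which is finite by hypothesis. For any $k\in G$ and $R>0$, if $i\in J$ satisfies $b(i)\in B_R(k)$, then $\|a(i)-k\|_\infty \leq \|a(i)-b(i)\|_\infty+\|b(i)-k\|_\infty \leq M+R$, so $i\in a^{-1}(B_{R+M}(k))$. The reverse inclusion follows symmetrically. This gives the two-sided sandwich
\[
|a^{-1}(B_{R-M}(k))\cap J| \leq |b^{-1}(B_R(k))\cap J| \leq |a^{-1}(B_{R+M}(k))\cap J|
\]
for all $R>M$ and all $k\in G$.

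Next I would use the structure of $G=\Z^{d_1}\times \Z_{N_1}\times\cdots\times\Z_{N_{d_2}}$: for $R$ larger than $\max_j N_j$, the finite factors are saturated, so $|B_R(0)|=(2R+1)^{d_1}\,N_1\cdots N_{d_2}$. Hence
\[
\lim_{R\to\infty}\frac{|B_{R\pm M}(0)|}{|B_R(0)|} = \lim_{R\to\infty}\frac{(2R\pm 2M+1)^{d_1}}{(2R+1)^{d_1}} = 1.
\]
Dividing the sandwich above by $|B_R(0)|$ and taking $\inf_{k\in G}$ yields
\[
\frac{|B_{R-M}(0)|}{|B_R(0)|}\inf_{k}\frac{|a^{-1}(B_{R-M}(k))\cap J|}{|B_{R-M}(0)|} \leq \inf_k \frac{|b^{-1}(B_R(k))\cap J|}{|B_R(0)|} \leq \frac{|B_{R+M}(0)|}{|B_R(0)|}\inf_{k}\frac{|a^{-1}(B_{R+M}(k))\cap J|}{|B_{R+M}(0)|},
\]
after reindexing $k$ (which leaves the infimum unchanged since $G$ is a group). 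Taking $\liminf_{R\to\infty}$ and using that the ratios of box sizes tend to $1$ yields $D^-(b;J)=D^-(a;J)$, and the identical argument with $\limsup$ and $\sup_k$ gives $D^+(b;J)=D^+(a;J)$.

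The only possible obstacle is purely notational: making sure that the shifts by $\pm M$ in the center $k$ can be absorbed into the infimum/supremum (which is immediate because $k\mapsto k-k_0$ is a bijection of $G$) and that the radius-shifted box sizes behave well under the limit (which is transparent once the split $\Z^{d_1}\times$(finite) is used). No genuinely delicate estimate is required.
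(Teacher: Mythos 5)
Your proposal is correct and follows essentially the same route as the paper: both rest on the inclusion of preimages $a^{-1}(B_{R\mp M}(k))\cap J \subseteq b^{-1}(B_R(k))\cap J \subseteq a^{-1}(B_{R\pm M}(k))\cap J$ together with the observation that replacing $R$ by $R\pm M$ does not change the normalized counts in the limit (the paper's equation \eqref{equation:AddingABitDoesNotMatter}). Your version is if anything slightly cleaner, since the two-sided sandwich handles all four inequalities at once and you explicitly justify $|B_{R\pm M}(0)|/|B_R(0)|\to 1$, which the paper asserts without comment.
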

\begin{proof}
Let $\|a(i)-b(i)\|_\infty \leq M$ for all $i\in I$ and choose $J\subseteq I$.

Clearly,
\begin{eqnarray}\qquad
  D^-(b;J){=}\liminf_{R\to\infty} \inf_{k\in G} \frac{|b^{-1} (B_R(k) )
\cap J|}{ |B_R(0)|}{=}\liminf_{R\to\infty} \inf_{k\in G} \frac{|b^{-1} (B_{R+M}(k) )
\cap J|}{ |B_R(0)|}. \label{equation:AddingABitDoesNotMatter}
\end{eqnarray}
Choose $k_m\in G$, $R_m\in\R^+$ with
$$D^-(b;J)=\lim_{m\to\infty} \frac{|b^{-1} (B_{R_m+M}(k_m) )
\cap J|}{ |B_R(0)|}$$

Now observe that due to the boundedness of $a-b$, we have  $a(j)\in B_R(k)$ implies $b(j)\in B_{R+M}(k)$ and we conclude that
$$  |a^{-1}(B_{R_m}(k_m)\cap J|\leq |b^{-1}(B_{R_m+M}(k_m)\cap J| $$ and so
\begin{eqnarray*}
D^-(a;J)&=&\liminf_{R\to\infty} \inf_{k\in G} \frac{|a^{-1} (B_R(k) )
\cap J|}{ |B_R(0)|} \leq \liminf_{n\to\infty} \frac{|a^{-1} (B_{R_m}(k_m) )
\cap J|}{ |B_{R_m}(0)|}\\  &\leq& \lim_{n\to\infty} \frac{|b^{-1} (B_{R_m+M}(k_m) )
\cap J|}{ |B_{R_m}(0)|} =D^-(b;J).\end{eqnarray*}
The inequalities $D^-(a;J)\geq D^-(b;J)$, $D^+(a;J)\leq D^+(b;J)$ and $D^+(a;J)\geq D^+(b;J)$ follow similarly.
\end{proof}

\begin{proposition}\label{proposition:BesselGivesFinite} Let $\F$ be Bessel with $\|f\|\geq \uu>0$ and $\G$ be a frame. If  $(\F,a,\G)$ is $\ell_2$-localized, then $D^+(a;I)<\infty$.
\end{proposition}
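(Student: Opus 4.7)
The plan is to bound $|J|:=|a^{-1}(B_R(k))\cap I|$, uniformly in $k\in G$, by a constant multiple of $|B_{R+M}(0)|$, where the enlargement $M$ depends only on the $\ell_2$-envelope $r$ and on the frame bounds of $\G$. Since the cyclic coordinates saturate for large $R$ and $|B_{R+M}(0)|/|B_R(0)|\to 1$ as $R\to\infty$, this will immediately force $D^+(a;I)<\infty$.

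First I would extract a uniform ``column'' estimate. Applying the Bessel inequality for $\F$ (with Bessel constant $B_\F$) to the test vector $g_{k'}\in\G$, and noting that the upper frame inequality for $\G$, evaluated at $g_{k'}$ itself, forces $\|g_{k'}\|^2\leq B$, gives
\[
\sum_{i\in I}|\langle f_i,g_{k'}\rangle|^2 \;\leq\; B_\F\,B \qquad\text{for every } k'\in G.
\]

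Second, I would use the $\ell_2$-localization to show that every $f_i$ with $a(i)\in B_R(k)$ already has most of its $\G$-expansion energy inside the enlarged box $B_{R+M}(k)$. Indeed, for such an $i$ and any $k'\notin B_{R+M}(k)$ one has $\|a(i)-k'\|_\infty>M$, so after the substitution $n=a(i)-k'$ the localization hypothesis yields the $(R,k)$-independent tail bound
\[
\sum_{k'\notin B_{R+M}(k)}|\langle f_i,g_{k'}\rangle|^2 \;\leq\; \sum_{\|n\|_\infty>M} r(n)^2.
\]
Since $r\in\ell_2(G)$, I can fix $M$ (depending only on $r$, $A$, $\uu$) so that this tail is at most $A\uu^2/2$. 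Combined with the lower frame inequality $\sum_{k'\in G}|\langle f_i,g_{k'}\rangle|^2\geq A\|f_i\|^2\geq A\uu^2$, this gives $\sum_{k'\in B_{R+M}(k)}|\langle f_i,g_{k'}\rangle|^2\geq A\uu^2/2$, uniformly over $i\in J$.

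Finally I would sum this lower bound over $i\in J$ and interchange the order of summation, inserting the column estimate from the first step:
\[
\tfrac{1}{2}A\uu^2\,|J| \;\leq\; \sum_{k'\in B_{R+M}(k)}\sum_{i\in I}|\langle f_i,g_{k'}\rangle|^2 \;\leq\; B_\F\,B\,|B_{R+M}(0)|.
\]
Dividing by $|B_R(0)|$, then taking $\sup_k$ and $\limsup_{R\to\infty}$, yields $D^+(a;I)\leq 2B_\F B/(A\uu^2)<\infty$. The only delicate point is the second step: $M$ must be chosen independently of both $R$ and $k$, which is exactly what the global $\ell_2$-summability of the envelope $r$ guarantees---mere pointwise decay would not close the argument.
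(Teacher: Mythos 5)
Your proof is correct and rests on the same two ingredients as the paper's: the Bessel bound of $\F$ tested against the frame vectors $g_{k'}$ to control the column sums $\sum_{i}|\langle f_i,g_{k'}\rangle|^2$, and $\ell_2$-localization combined with the lower frame bound of $\G$ to concentrate each $f_i$'s energy in a box of fixed radius $M$ around $a(i)$. The paper runs the same double-counting argument by contradiction, concentrating on a single fiber $a^{-1}(k_m)$ with $|a^{-1}(k_m)|\ge m$ and the fixed box $B_M(k_m)$, whereas your direct estimate over $B_{R+M}(k)$ is a minor variant that additionally produces the explicit bound $D^+(a;I)\le 2B_\F B_\G/(A_\G\uu^2)$.
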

\begin{proof}
Let $B_{\F}$ be a Bessel bound of $\F$ and $A_{\G}, B_{\G}$ be frame bounds of $\G$. Choose $r\in \ell_2(G)$ with $|\langle f_i, g_n \rangle|\leq r(a(i)-n)$ for $i\in I, n\in G$, and  $M$ with $\sum_{n\notin B_M(0)} r(n)^2 \leq \tfrac 1 2 A_{\G}\uu^2$.  Suppose $D^+(a;I)=\infty$. Then exists for each $m\in\N$ an element $k_m\in G$ with $|a^{-1}(k_m)|\geq m$. We compute
\begin{eqnarray*}
 B_{\F} B_{\G} |B_M(0)| & \geq &  B_{\F} \sum_{n\in B_M(k_m)} \|g_n\|^2
 \geq  \sum_{i\in I} \sum_{n\in B_M(k_m)} |\langle f_i, g_n\rangle|^2  \\
 &\geq & \sum_{i\in a^{-1}(k_m) }\Big(\sum_{n\in G} |\langle f_i, g_n\rangle|^2 - \sum_{n\notin B_M(k_m)} |\langle f_i, g_n\rangle|^2 \Big)\\
&\geq&  \sum_{i\in a^{-1}(k_m) }\Big( A_{\G} \| f_i\|^2 - \sum_{n\notin B_M(k_m)} r(k_m-n)^2 \Big)\\
&\geq& m\Big( A_{\G} \uu^2 - \tfrac 1 2 A_{\G} \uu^2 \Big) \geq\tfrac 1 2 m  A_{\G} \uu^2.
\end{eqnarray*}
As the left hand side above is finite and independent of $m$ while the right hand side grows linearly with $m$, we have reached a contradiction.
\end{proof}

\begin{proposition}\label{proposition:equalityDensityConcepts}
  Let $\G$ be a frame and $(\F,a,\G)$ be $\ell_2$-localized where $\|f_i\|\geq\uu>0$, $i\in I$. Then for any $J\subseteq I$, $\F_J=\{f_j\}_{j\in J}$, we have
  $D^-(a;J)\leq      D^-(\F_J;\G)$ and $D^+(a;J)=
     D^+(\F_J;\G)$. If, moreover, $\F$ is Bessel, then $D^-(a;J)=
     D^-(\F_J;\G)$.
\end{proposition}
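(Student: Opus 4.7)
The plan is to compare the two densities by noting that, in the numerator $S_R(k):=\sum_{j\in J}a_{f_j}\sum_{n\in B_R(k)}|\langle f_j,g_n\rangle|^2$, each term with $a(j)$ deep inside $B_R(k)$ contributes $\approx 1$, each term with $a(j)$ far outside contributes only a negligible tail, and only a boundary shell of terms is ambiguous. Let $r\in\ell_2(G)$ witness the $\ell_2$-localization, set $\epsilon_L:=\sum_{\|m\|_\infty>L}r(m)^2$ (so $\epsilon_L\to 0$), and let $A$ denote a lower frame bound of $\G$. From $\|f_i\|\ge\uu$ one has $a_{f_i}\le(A\uu^2)^{-1}$ and $a_{f_i}\sum_{n\in G}|\langle f_i,g_n\rangle|^2=1$; combined with $|\langle f_i,g_n\rangle|\le r(a(i)-n)$ these yield the two pointwise estimates
\[
  a(i)\in B_{R-L}(k)\ \Longrightarrow\ a_{f_i}\!\!\sum_{n\in B_R(k)}\!\!|\langle f_i,g_n\rangle|^2\ge 1-(A\uu^2)^{-1}\epsilon_L,
\]
\[
  a(i)\notin B_{R+L}(k)\ \Longrightarrow\ a_{f_i}\!\!\sum_{n\in B_R(k)}\!\!|\langle f_i,g_n\rangle|^2\le(A\uu^2)^{-1}\epsilon_L,
\]
each obtained by splitting $\sum_{n\in B_R(k)}=\sum_{n\in G}-\sum_{n\notin B_R(k)}$ (respectively, bounding pointwise by $r$) and using $\sum_{\|m\|_\infty>L} r(m)^2=\epsilon_L$ on the tail.

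Writing $N_R(k;J):=|a^{-1}(B_R(k))\cap J|$, the first implication gives $S_R(k)\ge (1-(A\uu^2)^{-1}\epsilon_L)N_{R-L}(k;J)$. Dividing by $|B_R(0)|$, using $|B_{R-L}(0)|/|B_R(0)|\to 1$, passing to $\inf_k$ or $\sup_k$, then $\liminf_R$ or $\limsup_R$, and finally $L\to\infty$, produces $D^-(\F_J;\G)\ge D^-(a;J)$ and $D^+(\F_J;\G)\ge D^+(a;J)$. Neither of these steps uses a Bessel hypothesis on $\F$.

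For the reverse bounds, decompose $S_R(k)\le N_{R+L}(k;J)+T_R^L(k)$, where $T_R^L(k)$ collects the $j\in J$ with $a(j)\notin B_{R+L}(k)$. Writing $C'_j:=|a^{-1}(j)\cap J|$, swapping the sums over $n$ and $i$, grouping by $j=a(i)$, and substituting $m=j-n$ (noting $\|m\|_\infty>L$ for these indices) sharpens the tail to
\[
  T_R^L(k)\le (A\uu^2)^{-1}\sum_{\|m\|_\infty>L} r(m)^2\,N_R(k+m;J),
\]
so the crux becomes a uniform upper bound on $N_R(k+m;J)$. If $D^+(a;J)<\infty$, then $N_R(k+m;J)\le(D^+(a;J)+\eta)|B_R(0)|$ for large $R$, forcing $T_R^L(k)/|B_R(0)|\to 0$ as $L\to\infty$; combined with $\sup_k N_{R+L}(k;J)/|B_R(0)|\to D^+(a;J)$ this yields $D^+(\F_J;\G)\le D^+(a;J)$, and the case $D^+(a;J)=\infty$ is automatic from the reverse inequality. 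For the $D^-$ direction the tail must be controlled at \emph{every} $k$, including those realizing $\inf_k N_{R+L}(k;J)$, so one estimates $N_R(k+m;J)\le N_R(k+m;I)$ and needs $\sup_{k'}N_R(k';I)\le(D^+(a;I)+\eta)|B_R(0)|$. Here Proposition~\ref{proposition:BesselGivesFinite} enters: the Bessel hypothesis ensures $D^+(a;I)<\infty$ and supplies the required uniform control. The main obstacle throughout is precisely this tail bookkeeping; without Bessel, $D^+(a;I)$ may be infinite and $T_R^L(k)$ need not vanish uniformly in $k$, which is the exact reason the lower-density equality genuinely requires Bessel while the upper-density equality does not.
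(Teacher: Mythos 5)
Your argument is correct and follows essentially the same route as the paper's proof: the same split of $\sum_{j\in J}a_{f_j}\sum_{n\in B_R(k)}|\langle f_j,g_n\rangle|^2$ into terms with $a(j)$ inside $B_{R\mp L}(k)$ (each contributing $1$ up to an $\ell_2$-tail of $r$) plus a boundary/exterior tail, the same dichotomy on $D^+(a;J)=\infty$ versus $D^+(a;J)<\infty$ for the upper density, and the same appeal to Proposition~\ref{proposition:BesselGivesFinite} to control the tail uniformly in $k$ for the lower density. Your tail estimate via $\sum_{\|m\|_\infty>L}r(m)^2\,N_R(k+m;J)$ is a slightly different bookkeeping of what the paper bounds using a uniform fiber count $K$, but the content is identical.
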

\begin{proof} Let $r\in \ell_2(G)$ be given with $|\langle f_i, g_n \rangle| \leq r(a(i)-n))$ for all $i\in I$, $n\in G$.
Let $A$ be the lower frame bound of $\G$. Then for all $i\in I$,
$$0<\uu^2 A\leq \sum_{n\in G} |\langle f_i,g_n \rangle|^2\leq \sum_{n\in G} r(a(i)-n)^2 = \|r\|^2,$$ so
 $\|r\|^{-2}\leq a_{f_i}\leq \uu^{-2} A^{-1}$.
  For $\epsilon>0$ choose $M$ so that
  $$ \uu^{-2} A^{-1}\sum_{n\notin B_M(0)} r(n)^2<\epsilon.$$
  For all $i\in I$ this implies
  $$ 1 - a_{f_i} \sum_{n\in B_M(a(i))} |\langle f_i,g_n \rangle|^2 =
  a_{f_i} \sum_{n\notin B_M(a(i))} |\langle f_i,g_n \rangle|^2 < \epsilon
  .$$

 Let $J\subseteq I$. For any $k$ and  $R>M$ we have
 \begin{eqnarray*}
(1-\epsilon)  |a^{-1} (B_{R-M} (k)) \cap J|
        &=&  \sum_{j\in J, a(j)\in B_{R-M}(k)} (1-\epsilon) \\ &\leq& \sum_{j\in J, a(j)\in B_{R-M}(k)} a_{f_j} \sum_{n\in B_R(k)}  |\langle f_j,g_n \rangle|^2 \\
        & \leq &  \sum_{j\in J} a_{f_j} \sum_{n\in B_R(k)}  |\langle f_j,g_n \rangle|^2\,.
 \end{eqnarray*}
Equation \eqref{equation:AddingABitDoesNotMatter} implies then   $D^-(a;J)\leq
     D^-(\F_J;\G)$ and $D^+(a;J)\leq
     D^+(\F_J;\G)$.

Note that if $D^+(a;J)=\infty$, then $D^+(a;J)=
     D^+(\F_J;\G)$ follows from  $D^+(a;J)\leq
     D^+(\F_J;\G)$.

To obtain $D^+(a;J)\geq
     D^+(\F_J;\G)$ if $D^+(a;I)<\infty$ and  $D^-(a;J)\geq
     D^-(\F_J;\G)$ if $\F$ is Bessel, we may assume $D^+(a;J)<\infty$ (see Proposition~\ref{proposition:BesselGivesFinite}). Then there exists $K\in\N$ with $|a^{-1}(k)|\leq K$ for all $k\in G$. For $\epsilon >0$ and $M$ sufficiently large, we have for $n\in B_R(k)$, $k\in G$,
\begin{eqnarray*}
  \sum_{j\in J, a(j)\notin B_{R+M}(k)} |\langle f_j, g_n  \rangle|^2 & \leq& \sum_{j\in J, a(j)\notin B_{R+M}(k)} r(a(j)-n)^2 \\ &\leq& K\sum_{m\notin B_{R+M}(k)} r(m-n)^2 \leq \epsilon.
\end{eqnarray*}
We conclude for $k\in G$ and $R$ large that
     \begin{eqnarray*}
     \sum_{j\in J} a_{f_i} \sum_{n\in B_R(k)}  |\langle f_j,g_n \rangle|^2
        &\leq &  \sum_{j\in J, a(j)\in B_{R+M}} a_{f_i} \sum_{n\in B_R(k)}  |\langle f_j,g_n \rangle|^2 \\
        && \qquad +\sum_{j\in J, a(j)\notin B_{R+M}} a_{f_i} \sum_{n\in B_R(k)}  |\langle f_j,g_n \rangle|^2 \\
        &\leq& |a^{-1} (B_{R+M} (k)) \cap J| \\
        &&\qquad + \sum_{n\in B_R(k)} \sum_{j\in J, a(j)\notin B_{R+M}} a_{f_i}  |\langle f_j,g_n \rangle|^2\\
        &\leq& |a^{-1} (B_{R+M} (k)) \cap J| + \uu^{-2}A^{-1} B_R(0) \epsilon,
 \end{eqnarray*}
 and $D^-(a;J)\geq
     D^-(\F_J;\G)$,  $D^+(a;J)\geq
     D^+(\F_J;\G)$ follows.
\end{proof}

The following example illustrates the role of the Bessel bound of $\F$ to achieve $D^-(a;J)=
     D^-(\F_J;\G)$.
\begin{example} \rm
  Let $\G=\{e_k\}_{k\in\Z}$ be an orthonormal basis, and let the members of $\F$ be given by $f_i=f=\sum_{m\in \Z}^\infty 2^{-|m|} e_m$, $i\in \Z$. For $a:\Z\longrightarrow \Z$, $i \mapsto 0$, we have $(\F,a,\G)$ is $\ell_1$-localized, $D^-(a;\Z)=0$, but
  \begin{eqnarray*}
  D^-(\F;\G)&=&
  \liminf_{R\to\infty} \inf_{k\in \Z} \frac{\sum_{f\in \F} \sum_{n\in B_R(k)}
|\langle f,e_{n-k} \rangle|^2}{ |B_R(0)|} \\ &=& \liminf_{R\to\infty}
\inf_{k\in \Z} \frac{\sum_{i \in \Z} \sum_{n\in B_R(k)}
2^{- |n-k| } }{ |B_R(0)|}\\ &=& \liminf_{R\to\infty}
\inf_{k\in \Z}\infty =\infty.
\end{eqnarray*}
\end{example}

\begin{definition}\label{definition:RelativeDensity}
Let $\F$ be $\ell_1$-localized with respect to $ \G$
with $0<D^-(\F;\G)\leq D^+(\F;\G)< \infty$. The  {\bf relative lower density}, respectively {\bf relative upper density} of
$\F'\subseteq \F$ is
\begin{eqnarray}
   R^-(\F',\F;\G)&=&\frac{D^-(\F';\G)}{ D^{+}(\F;\G)} , \label{equation:UpperRelativeDensity} \\  R^+(\F',\F;\G)&=&\frac{D^+(\F';\G)}{ D^{-}(\F;\G)}
\label{equation:LowerRelativeDensity}.
\end{eqnarray}
If $R^-(\F',\F;\G)= R^+(\F',\F;\G)$, then we say that $\F'$ has {\bf uniform relative density} $R(\F',\F;\G)= R^+(\F',\F;\G)$ in  $\F$.
\end{definition}
%
%

 Examples~\ref{example:localization1} and \ref{example:localization2}
below  illustrate  the interaction  of density and localization.
We are now ready to restate  the main result of the paper.

\begin{theorem}\label{theorem:MainResult} Let $\cc$ be the
function provided in Theorem~\ref{theorem:RestrictedInvertTheoremFinite}.
Let $\F\subseteq \H$ be $\ell_1$-localized with respect to the frame $\G$
and assume that  $\|f\|\geq \uu$ for all $f\in \F$ and $\F$ is Bessel
with Bessel bound $B_\F$. Assume either \\[.1cm]

\noindent (A) $\G = \{g_k: k\in  G\}$ is a Riesz basis for $\H$
with Riesz bounds $A_\G,B_\G$, \\[.1cm]

or\\[.1cm]

\noindent (B) $\G = \{g_k: k\in G\}$ is a frame for $\H$ with
$\ell_1$- self-localized dual frame
$\widetilde \G = \{\widetilde g_k: k\in G\}$.\\[.1cm]

\noindent If $(\F;\G)$ is $\ell_1$-localized with
$
    0<D^-(\F;\G)\leq D^+(\F;\G)<\infty.
$
 Then for every $\epsilon>0$ and $\delta>0$ there is a subset
$ \F_{\epsilon\delta} \subseteq \F$ of uniform density with \\[.1cm]

\noindent (1)  \quad $\displaystyle
  R^+(\F_{\epsilon\delta},\F;\G)=\frac{D(\F_{\epsilon\delta};\G)}
{ D^{-}(\F;\G)}  \ge \frac{(1-\epsilon)\uu^2}{B_\F}$, \\[.1cm]


\noindent (2) \quad  $\F_{\epsilon\delta} $ is a Riesz  sequence
with Riesz bounds \\
\indent \quad (A)\quad $\displaystyle  \cc(\epsilon) (1-\delta)
\frac {A_\G} {B_\G}\, \uu^2$, $B_\F$. \\
 \indent \quad (B)\quad $\displaystyle \cc(\epsilon) (1-\delta)\,
\uu^2$, $B_\F$.
%
\end{theorem}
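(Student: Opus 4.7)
The plan is to reduce Theorem~\ref{theorem:MainResult} to Theorem~\ref{theorem:MainForRiesz} by producing an index-map version of the hypotheses that encodes the coordinate-free localization of Definition~\ref{definition:ourlocalization}, then translating the conclusions back via Proposition~\ref{proposition:equalityDensityConcepts}.

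First I would enumerate $\F = \{f_i\}_{i\in I}$ by a countable set $I$. Using the hypothesis that $\F$ is $\ell_1$-localized with respect to $\G$ in the sense of Definition~\ref{definition:ourlocalization}, I would pick, for each $i\in I$, a center $k_i\in G$ witnessing $|\langle f_i,g_n\rangle|\leq r(n-k_i)$ for all $n\in G$, with $r\in\ell_1(G)$ independent of $i$. Setting $a(i)=k_i$ defines a localization map making $(\F,a,\G)$ $\ell_1$-localized in the sense of Definition~\ref{definition:localization}. Since $\F$ is Bessel, $\|f_i\|\geq\uu>0$, and $\ell_1$-localization implies $\ell_2$-localization, Proposition~\ref{proposition:equalityDensityConcepts} gives the equalities $D^\pm(a;J)=D^\pm(\F_J;\G)$ for every $J\subseteq I$; in particular the hypothesis $0<D^-(\F;\G)\leq D^+(\F;\G)<\infty$ transfers to $0<D^-(a;I)\leq D^+(a;I)<\infty$.

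With these hypotheses in place, I would apply Theorem~\ref{theorem:MainForRiesz} (case (A) or (B) according to the assumption on $\G$) to obtain, for the prescribed $\epsilon,\delta>0$, a subset $J=J_{\epsilon\delta}\subseteq I$ of uniform density satisfying
\[
\frac{D(a;J)}{D^-(a;I)} \geq \frac{(1-\epsilon)\uu^2}{B_\F} \qquad\text{and}\qquad \Big\|\sum_{j\in J} b_j f_j\Big\|^2 \geq \cc(\epsilon)(1-\delta)\,c\,\uu^2 \sum_{j\in J}|b_j|^2,
\]
with $c=A_\G/B_\G$ in case (A) and $c=1$ in case (B). Setting $\F_{\epsilon\delta}=\{f_j:j\in J\}$ and invoking Proposition~\ref{proposition:equalityDensityConcepts} once more converts the left-hand ratio to $D(\F_{\epsilon\delta};\G)/D^-(\F;\G)$, yielding conclusion (1). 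The Riesz lower bound required in (2) is exactly the inequality just displayed, while the Riesz upper bound $B_\F$ is immediate from $\F$ being Bessel with bound $B_\F$, since $\F_{\epsilon\delta}\subseteq\F$.

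The main subtlety, and the step where I would be most careful, is the equivalence of the two density notions: the cleanest route is exactly through Proposition~\ref{proposition:equalityDensityConcepts}, whose hypotheses (Bessel, $\ell_2$-localized, $\|f_i\|\geq\uu$) are all available here, so that the ratio appearing in Theorem~\ref{theorem:MainForRiesz}(1) and the ratio defining $R^+(\F_{\epsilon\delta},\F;\G)$ agree term by term. A minor bookkeeping point is that the choice of centers $k_i$ is non-unique, but by Proposition~\ref{proposition:bounded} any two admissible choices differ by a bounded function, which does not affect the densities $D^\pm(a;\cdot)$, so the constructed $a$ is well-defined up to irrelevant perturbations and the reduction is rigorous.
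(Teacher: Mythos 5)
Your proposal is correct and follows essentially the same route as the paper: the authors likewise observe that, under the stated hypotheses, Proposition~\ref{proposition:equalityDensityConcepts} makes the two density notions coincide, so that Theorem~\ref{theorem:MainResult} is a restatement of Theorem~\ref{theorem:MainForRiesz}. Your write-up merely fills in the (correct) details of extracting the localization map $a$ from the centers supplied by Definition~\ref{definition:ourlocalization}, which the paper leaves implicit.
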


\begin{proof} Note that for $\F=\{f_i\}_{i\in I}$, $J_{\epsilon\delta}\subseteq I$, and $\F_{\epsilon\delta}=\{f_j\}_{j\in J_{\epsilon\delta}}$, we have in general
 $$  \frac{D(a;J_{\epsilon\delta})}{ D^{-}(a;I)} \geq \frac{D(\F_{J_{\epsilon\delta}};\G)}
{ D^{-}(\F;\G)} =R^+(\F_{\epsilon\delta},\F;\G) . $$
But under the given assumptions, Proposition~\ref{proposition:equalityDensityConcepts} implies equality above, and, hence,  Theorem~\ref{theorem:MainResult} is  a restatement of Theorem~\ref{theorem:MainForRiesz}.
\end{proof}

Theorem~\ref{theorem:MainResult}
can be rephrased in terms of $\ell_1$-localized operators.  Again, given
an $\ell_1$-localized operator $T:\H'\longrightarrow \H$ and respective
orthonormal basis $\E$ of $\H'$ and a frame $\G$ of $\H$ with
$\F=T(\E)$ being $\ell_1$-localized with respect to $\G$, then
Theorem~\ref{theorem:MainResult} holds verbatim with the Bessel bound
$B_\F$ being replaced with $\|T\|^2$.

\begin{theorem}[{\bf General Infinite Dimensional Restricted Invertibility
Theorem}]
\label{theorem:RestrictedInvertTheoremInfinite2}

Let $\cc$ be the function provided in
Theorem~\ref{theorem:RestrictedInvertTheoremFinite}.
Let $\E$ and $\G =\{g_k\}_{k\in G}$ be orthonormal
bases for an Hilbert space $\H$,
let $T:\H \rightarrow \H$ be a bounded linear operator satisfying
$\|Te\| = 1$, for all $e\in \E$. Assume that  $T\E$ is $\ell_1$-localized with
respect to $\{g_k\}_{k\in G}$.  Then for all $\epsilon,\delta >0$,
there is a subfamily $\E_{\epsilon\delta}\subseteq \E$ of uniform density
with \\[.1cm]

\noindent (1) $\displaystyle  R^+(T\E_{\epsilon\delta},T\E;\G) \ge
\frac{1-\epsilon}{\|T\|^2}$, and \\[.1cm]

\noindent (2)  $T\E_{\epsilon\delta}$ is  a Riesz system with Riesz bounds
$ \cc(\epsilon) (1-\delta)$, $\|T\|^2$.
\end{theorem}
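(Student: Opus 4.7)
The plan is to deduce Theorem~\ref{theorem:RestrictedInvertTheoremInfinite2} as a direct specialization of Theorem~\ref{theorem:MainResult} applied to the family $\F := T\E$. Since $\G$ is an orthonormal basis, it is in particular a Riesz basis with Riesz bounds $A_\G = B_\G = 1$, putting us in case (A) of Theorem~\ref{theorem:MainResult}; the bulk of the proof is therefore a bookkeeping exercise to identify the relevant constants, verify the density hypothesis, and transfer the conclusion back to a subfamily of $\E$.

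The constants work out as follows. The normalization $\|Te\|=1$ for every $e\in\E$ gives $\uu = 1$. Since $\E$ is orthonormal, the identity
$$\sum_{e\in\E}|\langle h,Te\rangle|^2 \;=\; \sum_{e\in\E}|\langle T^*h,e\rangle|^2 \;=\; \|T^*h\|^2 \;\le\; \|T\|^2\|h\|^2$$
exhibits $\F$ as Bessel with bound $B_\F = \|T\|^2$; and the $\ell_1$-localization of $\F$ with respect to $\G$ is hypothesized. It remains to verify $0 < D^-(\F;\G) \le D^+(\F;\G) < \infty$. First I would select a localization map $a:I\to G$ so that $(\F,a,\G)$ is $\ell_1$-localized in the sense of Definition~\ref{definition:localization} (which exists by Definition~\ref{definition:ourlocalization}); Proposition~\ref{proposition:BesselGivesFinite} then yields $D^+(a;I)<\infty$, and Proposition~\ref{proposition:equalityDensityConcepts} transfers this to $D^+(\F;\G)<\infty$. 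For positivity of $D^-(\F;\G)$, I would combine the orthonormal-basis identity $\sum_{e\in\E}|\langle Te,g_m\rangle|^2 = \|T^*g_m\|^2$, whose total mass over $m\in G$ equals $|\E|=|G|$, with the uniform $\ell_1$-decay of the columns of $T\E$ against $\G$ to obtain a positive uniform lower bound on box averages.

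Once the hypotheses are in place, Theorem~\ref{theorem:MainResult}(A) produces a set $\F_{\epsilon\delta}\subseteq\F$ of uniform density satisfying $R^+(\F_{\epsilon\delta},\F;\G)\ge(1-\epsilon)\uu^2/B_\F = (1-\epsilon)/\|T\|^2$, and such that $\F_{\epsilon\delta}$ is Riesz with bounds $\cc(\epsilon)(1-\delta)(A_\G/B_\G)\uu^2 = \cc(\epsilon)(1-\delta)$ and $B_\F = \|T\|^2$. Setting $\E_{\epsilon\delta} := \{e\in\E : Te\in\F_{\epsilon\delta}\}$ yields $T\E_{\epsilon\delta} = \F_{\epsilon\delta}$, and conclusions (1) and (2) of the target theorem follow at once. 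The main (mild) obstacle is precisely the verification of $D^-(\F;\G)>0$: this is not automatic from $\|Te\|=1$ for an arbitrary bounded $T$ with $\ell_1$-localized image, and requires a short argument using the $\ell_1$-decay to rule out pathological concentration of the ``centers'' of $\{Te\}_{e\in\E}$ away from large portions of $G$; the Bessel bound together with unit normalization then caps any such crowding and gives the desired two-sided control.
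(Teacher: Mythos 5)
Your route is exactly the paper's: the authors give no separate proof of Theorem~\ref{theorem:RestrictedInvertTheoremInfinite2}, presenting it as the specialization of Theorem~\ref{theorem:MainResult} to $\F=T\E$ with ``$B_\F$ replaced by $\|T\|^2$.'' Your constant bookkeeping is correct: $\uu=1$ from $\|Te\|=1$; $B_\F=\|T\|^2$ from $\sum_{e\in\E}|\langle h,Te\rangle|^2=\|T^*h\|^2$; and case (A) with $A_\G=B_\G=1$ since $\G$ is orthonormal, which turns the lower Riesz bound $\cc(\epsilon)(1-\delta)(A_\G/B_\G)\uu^2$ into $\cc(\epsilon)(1-\delta)$ and the relative density bound into $(1-\epsilon)/\|T\|^2$; setting $\E_{\epsilon\delta}=\{e:Te\in\F_{\epsilon\delta}\}$ then gives the statement.

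The one step you add beyond the paper --- deriving $D^-(T\E;\G)>0$ from the stated hypotheses --- cannot be carried out, because the implication is false. Take $\E=\{e_i\}_{i\in\Z}$ and $\G=\{g_n\}_{n\in\Z}$ orthonormal bases of $\H$ and let $T$ be the isometry with $Te_i=g_{i^3}$. Then $\|Te_i\|=1$, $\|T\|=1$, and $T\E$ is $\ell_1$-localized with respect to $\G$ with envelope $r=\delta_0$, yet $D^-(T\E;\G)=0$: the gaps between consecutive cubes grow without bound, so for every $R$ a box of radius $R$ can be centered in a gap and picks up no mass. This also defeats your proposed mechanism: $\|T^*g_m\|^2$ is here the indicator function of $\{i^3:i\in\Z\}$, whose infimal box averages vanish, so the ``total mass plus uniform $\ell_1$-decay'' heuristic does not yield a uniform lower bound. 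The condition $0<D^-(\F;\G)\le D^+(\F;\G)<\infty$ is an explicit hypothesis of Theorem~\ref{theorem:MainResult} and is implicitly presupposed by the appearance of $R^+(\cdot,\cdot;\G)$ in conclusion (1) (Definition~\ref{definition:RelativeDensity} requires it to be defined); it should be carried over as an assumption, not proved. Your verification of finiteness of the upper density via Propositions~\ref{proposition:BesselGivesFinite} and \ref{proposition:equalityDensityConcepts} is fine, and once the lower-density condition is assumed rather than derived, the rest of your argument is correct and coincides with the paper's.
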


%

We close this section with two examples displaying the interaction of density, localization, and Theorems~\ref{theorem:MainResult} respectively \ref{theorem:RestrictedInvertTheoremInfinite2}.

\begin{example}\label{example:localization1}\rm
In the following, we shall consider as reference system for $\H$
\begin{itemize}
  \item an orthonormal basis $\G=\{g_n\}_{n\in\Z}$ of  $\H$;
  \item the system $\G' =\{g'_n\}$ given by $g'_{2n}=g'_{2n+1}=g_n$,
$n\in\Z$, that is, $\G'$ consists of two intertwined copies of $\G$;
  \item the system $\G'' $  given by the sequence
$$ \ldots ,e_{-7}, e_{-2},e_{-5},e_{-3},e_{-1}, e_0,e_1,e_3,e_5,e_2,e_7,e_9,
e_{11}, e_4, e_{13},e_{15},e_{17},e_6,e_{19},\ldots   $$
\end{itemize}.

Moreover, we shall  consider the operators $T_1,T_2,T_3:\H\longrightarrow \H$
given by
\begin{itemize}
  \item $T_1e_n=e_n$, $n\in \Z$, that is,  $T_1={\rm Id}$;
  \item $T_2 e_n= e_{2[\frac n 2]}$, $n\in \Z$;
  \item $T_3 e_0=e_1$, $T_3 e_n =e_n$ for $n\in \Z\setminus \{0\}$;
\end{itemize}
Clearly, $\|T_1\|=1$ and $\|T_2\|=\|T_3\|=\sqrt{2}$.
Note that the right hand side in Theorem~\ref{theorem:RestrictedInvertTheoremInfinite2} (1) is
$(1-\epsilon)$ for $T_1$ and $(1-\epsilon)/2$ for $T_2,T_3$. Set
$\F=\{e_n\}_{n\in\Z}$, $\F_{\rm even}=\{e_{2n}\}_{n\in\Z}$, and
observe that they form orthonormal bases for their closed linear span. Hence, we could choose $\F_{\epsilon\delta}=\F\subseteq \F$
in case of $T_1$, and  $\F_{\epsilon\delta}=\F_{\rm even}$ in case
of $T_1,T_2$. We now discuss strengths and shortcomings of Theorem~\ref{theorem:RestrictedInvertTheoremInfinite2} when using as reference systems $\G$, $\G'$, and $\G''$.

  \begin{enumerate}

\item Clearly, $\F$ is $\ell_1$--localized with
respect to $\G$.
Moreover $D^-(\F;\G)=D(\F;\G)=1$, $D(\F_{\rm even};\G)=\frac 1 2$,
and $$R(\F,\F;\G)= D(\F;\G) / D(\F;\G)=1,$$
$$R(\F_{\epsilon\delta},\F;\G)= D(\F_{\rm even};\G) / D(\F;\G)=
\frac 1 2.$$  So $\F$, $\F_{\rm even}$ satisfy the conclusions of  (1) in Theorem~\ref{theorem:RestrictedInvertTheoremInfinite2}  for $T_1$ respectively $T_2$ and $T_3$.

\item $\F$ is $\ell_1$--localized with
respect to $\G' $.
We have $D^-(\F;\G' )=D(\F;\G' )=\frac 1 2$ and
$D(\F_{\rm even};\G' )=\frac 1 4$, so again
$R(\F,\F;\G' )= D(\F;\G') / D(\F;\G')=1$ and
$R(\F_{\epsilon\delta},\F;\G' )= D(\F_{\rm even};\G' ) /
D(\F;\G' )=\frac 1 2$ for $T_2,T_3$, satisfying
Theorem~\ref{theorem:RestrictedInvertTheoremInfinite2} (1) for $T_1$ respectively $T_2$ and $T_3$.

\item
$\F$ is also $\ell_1$--localized with respect to $\G'' $. Now,
$D^-(\F;\G'' )=D(\F;\G'' )=1$, but $D(\F_{\rm even};\G'' )=\frac 1 4$,
consequently, $R(\F,\F;\G'' )= D(\F;\G'' ) / D(\F;\G'' )=1$, so
Theorem~\ref{theorem:RestrictedInvertTheoremInfinite2} (1) for $T_1$ is satisfied, but as
$$R(\F_{\epsilon\delta},\F;\G'' )= D(\F_{\rm even};\G'' ) / D(\F;\G'' )=
\frac 1 4,$$ so $\F_{\epsilon\delta}=\F_{\rm even}$ is not a valid choice
satisfying Theorem~\ref{theorem:RestrictedInvertTheoremInfinite2} (1) for $T_2,T_3$.
Theorem~\ref{theorem:RestrictedInvertTheoremInfinite2} guarantees for any $\epsilon,\delta >0$
the existence of a Riesz sequence $\F_{\epsilon\delta}$ with
$R(\F_{\epsilon\delta},\F;\G'' )= D(\F_{\epsilon\delta};\G'' ) / D(\F;\G'' )\geq
 (1-\epsilon) \frac 1 2$, and clearly, in the case of $T_2$ and $T_3$,
we may choose $\F_{\epsilon\delta}=\F_{\rm odd}=\F\setminus \F_{\rm even}$.
Then we  have the seemingly better result
$R(\F_{\epsilon\delta}=\F_{\rm odd},\F;\G'' )= D(\F_{\rm odd};\G'' ) /
D(\F;\G'' )=\frac 3 4 >(1-\epsilon) \frac 1 2$.

\item Note that regardless of how we adjust $\G$, we will not be
guaranteed a Riesz system as large as the optimal one for $T_3$, namely
$\F_{\epsilon\delta}=\F\setminus\{Te_0\}$. Clearly, this shortcoming is
shared by the finite dimensional version of Bourgain-Tzafriri.

  \end{enumerate}

The following example illustrates that the possible choices of index set $G$ of $\G$ is strongly influenced by $\F$ in Theorem~\ref{theorem:MainResult} respectively $T$ in Theorem~\ref{theorem:RestrictedInvertTheoremInfinite2}.

\end{example}

\begin{example}\label{example:localization2} \rm
Consider the operator $T_4:\H\longrightarrow \H$ given by
 $T_4 e_n=e_n+e_{2n}$, $n\in\Z$.
 We have $\|T_4e_n\|\geq \uu=\sqrt{2}$ for $n\in\Z$ and
$$\|T_4(\sum c_n e_n)\|= \|\sum c_n e_n + \sum_n c_n e_{2n}\|\leq
\|\sum c_n e_n\| + \|\sum_n c_n e_{2n}\| =2\|\sum c_n e_n\|.$$ As
$\|T_4 e_0\|=\|2 e_0\|=2$, we have $\|T_4\|=2$. Note that also $$\|T_4
( N^{-\frac 1 2 }\sum_{n=1}^N e_{2^n})\|=N^{-\frac 1 2 }
\| e_2+2e_4+ 2e_8+\ldots +2e_{2^{N-1}}+e_{2^N}\|=\sqrt{\tfrac {4(N-2)+2} N}
\rightarrow 2$$ as  $ N\to\infty$.

The right hand side in Theorem~\ref{theorem:MainResult} (1) is
$(1-\epsilon)/2$ for $T_4$, and the orthogonal family
$\F_{\epsilon\delta}=T_4\{e_{2n+1}\}_{n\in\Z}$ satisfies the conclusions of
Theorem~\ref{theorem:MainResult} (2).
But $T_4(\E)$ is not $\ell_1$-localized with respect to  $\G$ whenever $\G$ is a linear ordering of $\E=\{e_n\}_{n\in\Z}$.  To see this, presume that  $T_4\E$ is  $\ell_1$-localized with respect to  $\G=\{ g_n =e_{\sigma(n)} \}_{n\in \Z}$ where $\sigma$ is a permutation on $\Z$. Let $r\in \ell_1(\Z)$ be the respective bounding sequence and choose $N$ so that $r(k)<1$ for $|k|\geq N$.
   Now, for some $k_{2N}\in\Z$, we have
    $$ \delta_{2N,\sigma(n)}+\delta_{4N,\sigma(n)}= \langle Te_{2N}, g_n \rangle\leq r(k_{2N}-\sigma(n)),\quad n\in \Z.$$
    Inserting $n_1=\sigma^{-1}(2N)$ respectively $n_2=\sigma^{-1}(4N)$, we obtain $1\leq r(k_{2N}-2N)$ respectively $1\leq r(k_{2N}-4N)$, and, by choice of $N$, $|k_{2N}-2N|,|k_{2N}-4N|< N$, leading to the contradiction $2N<2N$.

    As an alternative to linear orders on $\E$, consider the following as reference system $\G_{\Z^2}$
    \begin{eqnarray}
      \begin{array}{llllllllllllll}
       & &  & & & & \vdots &  & & & & & \\
         & e_{-208} & e_{-104} & e_{-52} & e_{-26} & e_{-13} & e_{13} & e_{26} & e_{52} & e_{104} & e_{208} & e_{416} & \\
         & e_{-144} & e_{-72} & e_{-36} & e_{-18} & e_{-9} & e_{9} & e_{18} & e_{36} & e_{72} & e_{144} & e_{288} &  \\
        & e_{-80} & e_{-40} & e_{-20} & e_{-10} & e_{-5} & e_{5} & e_{10} & e_{20} & e_{40} & e_{80} & e_{160} &  \\
         \hdots &e_{-16} & e_{-8} & e_{-4} & e_{-2} & e_{-1} & e_{0} & e_{1} & e_{2} & e_{4} & e_{8} & e_{16} & \hdots \\
         & e_{-48} & e_{-24} & e_{-12} & e_{-6} & e_{-3} & e_{3} & e_{6} & e_{12} & e_{24} & e_{48} & e_{96} & \\
         & e_{-112} & e_{-56} & e_{-28} & e_{-14} & e_{-7} & e_{7} & e_{14} & e_{28} & e_{56} & e_{112} & e_{224} & \\
         & e_{-176} & e_{-88} & e_{-44} & e_{-22} & e_{-11} & e_{11} & e_{22} & e_{44} & e_{88} & e_{176} & e_{352} &  \\
         & e_{-240} & e_{-120} & e_{-60} & e_{-30} & e_{-15} & e_{15} & e_{30} & e_{60} & e_{120} & e_{240} & e_{480} &\\
          & &  & & & & \vdots &  & & & & &
      \end{array}.\notag
    \end{eqnarray}

    Clearly, $T_4$ is $\ell_1$-localized with respect to $\G_{\Z^2}$. In fact, we can choose $r=\delta_{(0,0)}+\delta_{(0,1)}\in \ell_1(\Z^2)$.

    Theorem~\ref{theorem:MainResult} guarantees for $\delta,\epsilon >0$  the existence of a Riesz sequence $\F_{\epsilon\delta}$ with $R(\F_{\epsilon\delta},\F;\G_{\Z^2})= D(\F_{\epsilon\delta};\G_{\Z^2}) / D(\F_{\epsilon\delta};\G_{\Z^2}) \geq (1-\epsilon)/2$.  We have $D(\F;\G_{\Z^2})=1$, but for the natural choice $\F_{\epsilon\delta}=T_4\{e_{2n+1}\}_{n\in\Z}$, we have  $D(\F_{\epsilon\delta};\G_{\Z^2}) =0$.  For $\F_{\epsilon\delta}=T_4 \{ e_{2^{2k}(2n+1)}\}_{n\in\Z, k\in \N_0}$, we have $D(\F_{\epsilon\delta};\G_{\Z^2}) =\frac 1 2$, therefore satisfying the conclusions of Theorem~\ref{theorem:MainResult}.

    For completeness sake, note that $T_4(\E)$ itself is not a Riesz
sequence.  To see this, observe that
    $$ \|\sum_{n=1}^N (-1)^n T_4e_{2^n}\|^2=\|\sum_{n=1}^N (-1)^n (e_{2^n}+e_{2^{n+1}})\|^2=\|-e_1+(-1)^N e_{2^{N+1}}\|^2=2
    $$
    while $\sum_{n=1}^N |(-1)^n|^2 =N$.
\end{example}

\section{Proof of Theorem~\ref{theorem:MainForRiesz}}

Note that  the generality assumed here, namely
that $G$ is any finitely generated Abelian group, is quite useful
in practice as the group is often given by the structure of the problem at hand.
For example, in time--frequency analysis, the group $G=\Z^{2d}$ is generally
used when considering single window Gabor systems. If we consider
multi-window Gabor systems, then an index set $\Z^{2d}\times H$ with
$H$ being a finite group is natural. (Also, see Example~\ref{example:localization2} for the dependence of $G$ on $T$ and $\F$.)
The following proposition will allow us to consider in our proofs only localization with respect to $\G$ with  $G= \Z^d$.

\begin{proposition}\label{P1}
Let $H$ be a finite Abelian group of order $N$ and $G=\Z^d\times H$. Choose a bijection $u:\{ 0,1,\ldots,N{-}1 \}\longrightarrow H$ and $$U:\Z^d\longrightarrow G,\quad (k_1,\ldots, k_d)\mapsto (k_1,\ldots, k_{d-1}, \lfloor k_d/N \rfloor, u(k_d\mod N)).$$ For  $a:I \longrightarrow G$ set $b=U^{-1}\circ a:I\longrightarrow \Z^d$. Then
\begin{enumerate}
  \item $D^{-}(b;J)= D^{-}(a;J)$ and $D^{+}(b;J)= D^{+}(a;J)$ for all $J\subseteq I$.
  \item $(\F,a,\G)$ is  $\ell_1$-localized  if
and only if $(\F,b,\G')$ is  $\ell_1$-localized where
$\G'=\{g_{U(k)}\}_{k\in\Z^d}$.
\end{enumerate}
\end{proposition}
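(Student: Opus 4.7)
The plan is to exploit that $U$ is a bijection $\Z^d\to G$ which, while not a group homomorphism, has a well-controlled interaction with $\|\cdot\|_\infty$-balls. Throughout, I will use the tautology $b^{-1}(S)=a^{-1}(U(S))$ for $S\subseteq\Z^d$, and reduce everything to comparing the geometry of $G$-balls against $\Z^d$-balls under $U$.

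For part~(1), the key geometric observation is that for $R\geq\max_j N_j-1$ the finite coordinates of $G$ are immaterial in the ball condition, so $|B_R(0)|_G=N(2R+1)^d$, and the preimage $\widetilde B_R(k):=U^{-1}(B_R(k))\subseteq\Z^d$ is a rectangular box of dimensions $(2R+1)^{d-1}\times N(2R+1)$ admitting an exact disjoint decomposition $\widetilde B_R(k)=\bigsqcup_{j=0}^{N-1}B_R(k^{(j)})_{\Z^d}$ into $N$ standard balls whose centers differ only in the $d$-th coordinate. This yields the averaging identity
\[
\frac{|a^{-1}(B_R(k))\cap J|}{|B_R(0)|_G}=\frac{1}{N}\sum_{j=0}^{N-1}\frac{|b^{-1}(B_R(k^{(j)}))\cap J|}{|B_R(0)|_{\Z^d}},
\]
and bounding an average by its extrema gives the easy halves $D^+(a;J)\leq D^+(b;J)$ and $D^-(a;J)\geq D^-(b;J)$. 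For the reverse halves, I would pack $B_R(k')_{\Z^d}$ with $N^{d-1}$ disjoint rectangular boxes of the form $\widetilde B_{R'}(k^{\flat,(\ell)})$ with $R'\approx R/N$ (choosing $R'$ slightly smaller to ensure the boxes fit inside), each being $U^{-1}$ of a single $G$-ball. Since $|B_{R'}(0)|_G=N(2R'+1)^d\sim(2R+1)^d/N^{d-1}$, the factor $N^{d-1}$ cancels, yielding $D^-(b;J)\geq D^-(a;J)$; the analogous covering from outside gives $D^+(b;J)\leq D^+(a;J)$. Boundary discrepancies contribute only $O(R^{d-1})$ terms, negligible against the volume $R^d$ in the $\liminf/\limsup$.

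For part~(2), the obstacle is that $U$ is not a homomorphism, so an $\ell_1$ bound indexed by $a(i)-k'\in G$ does not literally translate to an $\ell_1$ bound indexed by $b(i)-U^{-1}(k')\in\Z^d$. The decisive fact is that for each fixed $m\in\Z^d$ the difference set $\Delta(m):=\{U(k'+m)-U(k'):k'\in\Z^d\}\subseteq G$ depends only on $k'_d\bmod N$ and hence has $|\Delta(m)|\leq N$; symmetrically, for each $g\in G$ the set $\{m\in\Z^d:g\in\Delta(m)\}$ has size at most $N$, since $g$ determines $m_1,\ldots,m_{d-1}$ and each residue class $\bmod\,N$ pins down a unique $m_d$. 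Given $r\in\ell_1(G)$ witnessing $\ell_1$-localization of $(\F,a,\G)$, I would define $r'(m):=\sum_{g\in\Delta(m)}r(g)$; then the estimate $|\langle f_i,g_{U(m')}\rangle|\leq r(U(b(i))-U(m'))\leq r'(b(i)-m')$ together with swapping the order of summation gives $\|r'\|_1\leq N\|r\|_1$, witnessing localization of $(\F,b,\G')$. The converse direction follows symmetrically by defining $r(g):=\max\{r'(m):g\in\Delta(m)\}$.

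The main technical obstacle is bookkeeping the factor $N=|H|$ consistently: it appears simultaneously in the ball-cardinality ratio $|B_R(0)|_G=N|B_R(0)|_{\Z^d}$, in the $N^{d-1}$ pieces needed to tile a $\Z^d$-ball by $G$-ball preimages, and in the $N$-to-one multiplicities governing the localization transfer. Once the geometric picture of $G$-balls corresponding to rectangular $\Z^d$-boxes of aspect ratio $1{:}\cdots{:}1{:}N$ is in place, the remaining estimates are essentially combinatorial averaging.
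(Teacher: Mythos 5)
Your proof is correct, and for part (1) it finishes by a genuinely different route than the paper. Both arguments begin from the same structural fact: under $U^{-1}$, a $G$-ball $B_R(k)$ pulls back to a rectangular box in $\Z^d$ of aspect ratio $1:\cdots:1:N$ (the paper records this along radii $R=MN$ as $b^{-1}(B_{MN}^{\Z^d}(k))=a^{-1}\big(B_{MN}^{\Z^{d-1}}\times B_M^{\Z}\times H\big)$). The paper then closes the gap between the two differently shaped windows by citing Landau's lemma (Lemma 4 of \cite{Lan67}) that Beurling-type densities are unchanged when cubes $RB_1(0)+k$ are replaced by dilates $RD+k$ of any compact unit-measure set with null boundary; you instead prove the needed shape-independence by hand, using the exact decomposition of $U^{-1}(B_R^G(k))$ into $N$ stacked $\Z^d$-cubes of radius $R$ (the averaging identity gives $D^-(a;J)\ge D^-(b;J)$ and $D^+(a;J)\le D^+(b;J)$) together with packing/covering a $\Z^d$-cube of radius $R$ by $N^{d-1}$ pulled-back $G$-balls of radius about $R/N$ (giving the reverse inequalities, with the normalizations matching up to $1+O(1/R)$). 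Your version is more self-contained and avoids transplanting a continuous-measure lemma to the discrete group $\Z^d\times H$, at the cost of the tiling bookkeeping. For part (2), which the paper dismisses as obvious, your $\Delta(m)$ argument supplies exactly the missing content: since $U$ is not a homomorphism, one needs the two-sided multiplicity bounds $|\Delta(m)|\le N$ and $|\{m: g\in\Delta(m)\}|\le N$ to transfer an $\ell_1$ envelope in either direction with norm inflation at most $N$, and your verification of these bounds is sound.
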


\begin{proof} First, observe that for all $P\in\N$ we have
$$     D^-(a;J)=\liminf_{R\to\infty} \inf_{k\in G} \frac{|a^{-1} (B_R(k) )
\cap J|}{ |B_R(0)|}=\liminf_{M\to\infty} \inf_{k\in G} \frac{|a^{-1} (B_{MP}(k) )
\cap J|}{ |B_{MP}(0)|}$$ where $M\to\infty$, $M\in \N$.

Note that for $R=MN$, $M\in\N$,
\begin{eqnarray*}
  \big| b^{-1}\big(B_R^{Z^d}(k)\big)\cap I\big|
        &=&\big| a^{-1}\circ U \big(B_{MN}^{Z^d}(k)\big)\cap I\big| \\
        &=& \big| a^{-1} \big(B_{MN}^{Z^{d-1}}(k_1,\ldots,k_{d-1})\times B^\Z_{M}({\lfloor \tfrac {k_d} N\rfloor})\times H  \big)\cap I\big|.
\end{eqnarray*}
Now, compare
\begin{eqnarray*}D^-(b;J)&=&\liminf_{M\to\infty} \inf_{k\in \Z^d} \frac{ \big| b^{-1}\big(B_{MN}^{\Z^d}(k)\big)\cap I\big| }{ |B_{MN}^{\Z^d}(0)|}\\
&=&\liminf_{M\to\infty} \inf_{k\in \Z^d} \frac{ \big|a^{-1} \big(B_{MN}^{\Z^{d-1}}(k_1,\ldots,k_{d-1})\times B^\Z_{M}({\lfloor \tfrac {k_d} N\rfloor}) \times H \big)\cap I \big| }{(2MN+1)^d}
\end{eqnarray*}
and
\begin{eqnarray*}D^-(a;J)&=&\liminf_{M\to\infty} \inf_{(k,h)\in \Z^d\times H} \frac{ \big| a^{-1}\big(B_{MN}^{Z^d\times H}(n,h)\big)\cap I\big| }{ |B_{MN}^{\Z^d\times H}(0)|}\\
&=&\liminf_{M\to\infty} \inf_{k\in \Z^d} \frac{ \big| a^{-1} \big(B_{MN}^{\Z^{d-1}}(k_1,\ldots,k_{d-1})\times B^\Z_{MN}(k_d) \times H \big)\cap I\big| }{N(2MN+1)^d}.
\end{eqnarray*}
  As the sets $B_R(k)=RB_1(0) + k$ in Definition~\ref{definition:density} can be replaced by sets of the form $D_R(k)=RD + k$ if $D$ is a compact set of measure 1 and 0 measure boundary (Lemma 4 in \cite{Lan67}), we conclude that $D^-(b;J)=D^-(a;J)$, and, similarly $D^+(b;J)=D^+(a;J)$.

The second assertion is obvious. \end{proof}

%
%

\begin{lemma}\label{lemma:ApplySchurLocalization}
  Let $(\F=\{f_i\}_{i\in I}, a, \G=\{g_k\}_{k\in G})$ be $\ell_1$-localized
with $D^+(a;I)<\infty$.  For $M^R:\ell_2(G)\mapsto \ell_2(I)$ given by
$(M^R)_{i,k}=\langle f_i, g_k \rangle$ if $\|a(i)-k\|_\infty> R$, and
$(M^R)_{i,k}=0$ otherwise, we have
 \[
    \lim_{R\rightarrow \infty}\|M^R\| = 0.\]
\end{lemma}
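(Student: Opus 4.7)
The plan is to apply the standard Schur test
$$\|M^R\|^2\ \leq\ \Bigl(\sup_{i\in I}\sum_{k\in G}|(M^R)_{i,k}|\Bigr)\Bigl(\sup_{k\in G}\sum_{i\in I}|(M^R)_{i,k}|\Bigr),$$
and to show that each factor tends to $0$ as $R\to\infty$. By $\ell_1$-localization, fix $r\in\ell_1(G)$ with $|\langle f_i,g_k\rangle|\leq r(a(i)-k)$ for all $i,k$, so $|(M^R)_{i,k}|\leq r(a(i)-k)\mathbf{1}_{\|a(i)-k\|_\infty>R}$. Set $\rho(R):=\sum_{\|m\|_\infty>R}r(m)$; since $r\in\ell_1(G)$, we have $\rho(R)\to 0$.

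The row sums are essentially immediate: substituting $m=a(i)-k$,
$$\sum_{k\in G}|(M^R)_{i,k}|\ \leq\ \sum_{\|m\|_\infty>R}r(m)\ =\ \rho(R),$$
uniformly in $i$. For the column sums I will first extract a uniform pointwise multiplicity bound on $a$ from the hypothesis $D^+(a;I)<\infty$: by definition of the upper density there exist $R_0>0$ and $C<\infty$ such that $|a^{-1}(B_{R_0}(j))\cap I|\leq C\,|B_{R_0}(0)|$ for every $j\in G$; since $\{j\}\subseteq B_{R_0}(j)$, this yields $|a^{-1}(j)\cap I|\leq C':=C\,|B_{R_0}(0)|$ uniformly in $j$. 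Regrouping the column sum according to the value $j=a(i)$,
$$\sum_{i\in I}|(M^R)_{i,k}|\ \leq\ \sum_{\substack{j\in G\\ \|j-k\|_\infty>R}} r(j-k)\,|a^{-1}(j)\cap I|\ \leq\ C'\rho(R),$$
uniformly in $k$. Feeding the two estimates into Schur's test produces $\|M^R\|\leq\sqrt{C'}\,\rho(R)\to 0$.

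The only step that uses a hypothesis beyond the $\ell_1$ envelope is the extraction of a uniform pointwise bound on $|a^{-1}(j)|$, and I expect this to be the main (albeit modest) obstacle: without it, a column could accumulate arbitrarily many preimages at the same point and the column-sum factor need not be finite, let alone tend to $0$. The density bound $D^+(a;I)<\infty$ is exactly what prevents this pathology by forcing bounded multiplicities over every box of a fixed positive radius, hence in particular at every single point.
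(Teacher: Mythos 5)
Your argument is correct and is essentially the paper's own proof: both bound the row sums by the $\ell_1$-tail $\rho(R)$ of the localization envelope, bound the column sums by a uniform multiplicity constant (extracted from $D^+(a;I)<\infty$) times $\rho(R)$, and conclude via Schur's test. Your explicit derivation of the pointwise bound $|a^{-1}(j)|\le C'$ from the finiteness of the upper density is a detail the paper leaves implicit, but it is the same mechanism.
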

\begin{proof}
  As $(\F=\{f_i\}_{i\in I}, a, \G=\{g_k\}_{k\in G})$ is $\ell_1$-localized,
there exists $r\in \ell_1(G)$ with
  \begin{eqnarray}
      \notag r(k)\geq |\langle f_i, g_{k'}\rangle| \quad \text{ if}\quad a(i)-k'=k.
  \end{eqnarray}
Hence,
\begin{eqnarray}
  \sup_{i\in I} \sum_{k\in G} |(M^R)_{i,k}|&=& \sup_{i\in I}
\sum_{k': \|a(i)-k'\|_\infty>  R} |\langle f_i, g_{k'}\rangle|
  \leq \sup_{i\in I} \sum_{k': \|a(i)-k'\|_\infty>  R} r(a(i)-k') \notag \\
   &\leq& \sup_{i\in I} \sum_{k: \|k\|_\infty> R} r(k) =:\Delta_r(R). \notag
\end{eqnarray}
Similarly, setting $K=\max_{k\in G} |a^{-1}(k)|$ (it is finite since
$D^+(a;I)<\infty$) we obtain
\begin{eqnarray}
  \sup_ {k\in G} \sum_{i\in I} |(M^R)_{i,k}|&=& \sup_{k'\in G}
\sum_{i: \|a(i)-k'\|_\infty>  R} |\langle f_i, g_{k'}\rangle|
  \leq \sup_{k'\in G} \sum_{i: \|a(i)-k'\|_\infty>  R} r(a(i)-k') \notag \\
   &\leq& \sup_{k'\in G} K \sum_{k: \|k\|_\infty> R} r(k) =K\Delta_r(R). \notag
\end{eqnarray}
The result
now follows from Schur's criterion \cite{K,R}
since $\Delta_r(R)\longrightarrow 0$.
\end{proof}

The following  lemma  is similar to Lemma 3.6
of \cite{BCZ}.

\begin{lemma}\label{lemma:AnalysisOperatorsConverge}
Let $\G = \{g_k\}_{k\in  \Z^d}$ be a frame for $\H$
with dual frame $\widetilde \G =
\{\widetilde g_k\}_{k\in  \Z^d}$ and let $a:I \rightarrow G$
be a localization map of finite
upper density so that the Bessel system $(\{f_i\}_{i\in I},a,\G)$
is $\ell_1$-localized.  For $R>0$ set $$\displaystyle f_{iR}=
\sum_{n\in \mathbb Z^d:\, \|a(i)-n\|_\infty<R} \ \langle f_i , g_n
\rangle \widetilde g_n$$ and set
  \begin{eqnarray*}
    L_I: \H\longrightarrow \ell_2(I),\ h \mapsto \{\langle h,
f_i\rangle \} \text{ and } L_{IR}: \H\longrightarrow \ell_2(I),\ h
\mapsto \{\langle h, f_{iR}\rangle \}.
  \end{eqnarray*}
  Then
\[ \lim_{R\rightarrow \infty}\| L_I-L_{IR}\|  = 0.\]
\end{lemma}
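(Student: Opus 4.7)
The plan is to express $L_I - L_{IR}$ as the composition of a bounded operator with a matrix operator to which Lemma~\ref{lemma:ApplySchurLocalization} applies directly.

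First I would invoke the dual frame reconstruction formula $f_i = \sum_{n\in\Z^d}\langle f_i,g_n\rangle\widetilde g_n$ (convergent in $\H$) to obtain
\[ f_i - f_{iR} = \sum_{n:\,\|a(i)-n\|_\infty\geq R} \langle f_i,g_n\rangle\, \widetilde g_n. \]
Taking inner products with an arbitrary $h\in\H$, this gives
\[ \big((L_I-L_{IR})h\big)_i = \langle h,f_i-f_{iR}\rangle = \sum_{n:\,\|a(i)-n\|_\infty\geq R} \overline{\langle f_i,g_n\rangle}\,\langle h,\widetilde g_n\rangle. \]

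Next I would introduce the analysis operator $V:\H\longrightarrow \ell_2(\Z^d)$ of the dual frame, $Vh = \{\langle h,\widetilde g_n\rangle\}_n$, which is bounded with $\|V\|^2$ at most the upper Bessel bound of $\widetilde\G$; since $\widetilde\G$ is a frame (its frame bounds are reciprocals of those of $\G$), this is a standard fact. Define $W^R:\ell_2(\Z^d)\longrightarrow \ell_2(I)$ by $(W^R)_{i,n}=\overline{\langle f_i,g_n\rangle}$ when $\|a(i)-n\|_\infty\geq R$ and $0$ otherwise. The display above then reads $L_I-L_{IR}=W^R V$, hence $\|L_I-L_{IR}\|\leq \|W^R\|\,\|V\|$.

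Finally I would argue $\|W^R\|\to 0$. The matrix $W^R$ differs from $M^{R-1}$ of Lemma~\ref{lemma:ApplySchurLocalization} only by complex conjugation of its entries and by the harmless change of strict to non-strict inequality in the truncation, neither of which affects the operator norm nor the Schur-type row/column tail bounds used to prove that lemma. Consequently the same $\ell_1$-localization tail $\Delta_r(R-1)$ and the bound $K=\max_k|a^{-1}(k)|<\infty$ (which is finite because $D^+(a;I)<\infty$) yield $\|W^R\|\to 0$ via Schur's criterion, and the conclusion follows. The main obstacle, which is almost bookkeeping, is getting the factorization right: making sure the truncation on the $f_{iR}$ side on the ``$\widetilde g_n$'' coefficient indices is precisely the one governed by the localization estimates on $|\langle f_i,g_n\rangle|$, so that Lemma~\ref{lemma:ApplySchurLocalization} transfers unchanged.
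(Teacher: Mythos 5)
Your proposal is correct and follows essentially the same route as the paper: the paper likewise writes $f_i-f_{iR}$ via the dual-frame expansion, identifies $(L_I-L_{IR})h$ with the truncated matrix $M^R$ applied to the dual analysis coefficients $\{\langle h,\widetilde g_n\rangle\}_n$, and concludes from Lemma~\ref{lemma:ApplySchurLocalization} together with the boundedness of $h\mapsto\{\langle h,\widetilde g_n\rangle\}_n$. Your extra care about the complex conjugate and the strict versus non-strict truncation is harmless bookkeeping that does not change the argument.
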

\begin{proof}
  For $h\in \H$, we compute
  \begin{eqnarray}
    \|(L_I-L_{IR})h \|^2_{\ell_2}&=&\sum_{i\in I} |\langle h, f_{i}
\rangle- \langle h, f_{iR}\rangle|^2\
= \sum_{i\in I} |\langle h, f_{i}- f_{iR}\rangle|^2\notag\\
        &=& \sum_{i\in I} |\langle h, \hspace{-.3cm} \sum_{\|a(i)-n\|_\infty> R }
\langle f_{i},g_n\rangle \widetilde g_n\rangle|^2 = \sum_{i\in I} |\sum_{\|a(i)-n\|_\infty> R }\langle f_{i},g_n\rangle
\langle h, \widetilde g_n\rangle|^2\notag\\
        &=& \|M^R \{\langle h, \widetilde g_n\rangle\}_{n\in\mathbb Z^d}\|^2,\notag
  \end{eqnarray}
 with $M^R$ given by $M_{i,n}=\langle f_{i},g_n\rangle$ if $\|a(i)-n\|_\infty> R$
and $M_{i,n}=0$ otherwise.
 Since  $(\{f_i\}_{i\in I},a,\G)$ is $\ell_1$-localized, we can apply
Lemma~\ref{lemma:ApplySchurLocalization} and obtain
$\|M^R\| \longrightarrow 0$ as $R\rightarrow \infty$.
The result now follows from the boundedness of the map
$h\mapsto \{\langle h, \widetilde g_n\rangle\}_n$.
\end{proof}

\subsection{Proof of Theorem ~\ref{theorem:MainForRiesz}, assuming (A)}

  \vspace{.5cm}
  Fix $\epsilon,\delta > 0$. As $\cc$ given in Theorem~\ref{theorem:RestrictedInvertTheoremFinite} is positive and continuous, we can choose   $\epsilon'>0$ with $\epsilon'<\epsilon$ and
  \begin{eqnarray}
        \cc(\epsilon)(1-\delta)\leq \cc(\epsilon')(1-\tfrac \delta 2). \notag
  \end{eqnarray}
  Choose $\alpha>0$ satisfying $\alpha\leq \frac \delta {8}$ and
  \begin{eqnarray}
    (1-\epsilon')\frac{(1-\alpha)^2}{(1+\alpha)^2}\geq (1-\epsilon).\notag
  \end{eqnarray}
  Recall that for $R\in\mathbb N$,
  \begin{eqnarray*}
    D^-(a;I)=\liminf_{R\to\infty} \inf_{k\in \mathbb Z^d}
\frac{\big| a^{-1}(B_R(k))\cap I\big| }{\big|B_R(0)\big|}
    =\liminf_{R\to\infty} \inf_{k\in \mathbb Z^d}
\frac{\big| a^{-1}(B_R(k))\cap I\big| }{(2R+1)^d}
  \end{eqnarray*}
  where $B_R(k)=\{ k': \|k-k'\|_0 \leq R \}$.
Hence, we may choose $P>0$ such that for all $R\geq P$, $k\in\mathbb Z^d$,
we have
    \begin{eqnarray}\big| a^{-1}(B_R(k))\cap I\big| \geq (1-\alpha)\,
D^-(a;I)\,(2R+1)^d\, . \notag
  \end{eqnarray}

Let $\{\widetilde g_n\}_{n\in G}$ be the dual basis of $\{ g_n\}_{n\in G}$.
For any $R>0$,  set
\[ f_{iR}=\sum_{n\in \mathbb Z^d:\,
\|a(i)-n\|_\infty<R} \ \langle f_i , g_n \rangle \widetilde g_n.\]
 For
  \begin{eqnarray*}
    L_I: \H\longrightarrow \ell_2(I),\ h \mapsto \{\langle h,
f_i\rangle \} \text{ and } L_{IR}: \H\longrightarrow \ell_2(I),\ h
\mapsto \{\langle h, f_{iR}\rangle \},
  \end{eqnarray*}
  Lemma~\ref{lemma:AnalysisOperatorsConverge} implies that there
is $Q>0$ with the property that for all $R\geq Q$ we have
  \begin{eqnarray}
    \|L_I- L_{IR}\| \leq \min\Big \{\alpha\uu,\alpha \|T\|,
\Big(\frac {A\delta\,\cc(\epsilon')\uu^2} {8B}\Big)^{\frac 1 2}\Big\}.
\label{equation:operatornomLR}
  \end{eqnarray}

  Also, since
  \begin{eqnarray}
    D^+(a;I)=\limsup_{R\to\infty} \sup_{k\in \mathbb Z^d}
\frac{\big| a^{-1}(B_R(k))\cap I \big| }{\big|B_R(k)\big|}
    <\infty,\notag
  \end{eqnarray}
  we can pick $K>0$ with $|a^{-1}(k)|<K$, for all $k\in \mathbb Z^d$.

  By possibly increasing $P$ and $Q$, we can assume $P>Q$ and
  \begin{eqnarray}
    &K&\big( (2 P+1)^d-(2(P-Q)+1)^d\big)\notag \\
    &&\qquad \leq \alpha\,\uu^2 \,
\frac{(1-\epsilon')(1-\alpha)}{(1+\alpha)^2} \frac{D^-(a;I)(2P+1)^d }
{\|T\|^2}\label{equation:DropBorderRegions1}
  \end{eqnarray}

Set $\F_{k}=\{f_i:\ a(i)\in B_{P}(k) \}$ and correspondingly
$\F_{kQ}=\{f_{iQ}:\ a(i)\in B_{P}(k) \}$.
Equation (\ref{equation:operatornomLR}) implies
\begin{eqnarray*}
  \| f_i - f_{iQ} \|=\|L_I^\ast \{\delta_i\} - L_{IQ}^\ast
\{\delta_i\} \|=\|(L_I - L_{IQ})^\ast \{\delta_i\} \|\leq
\|L_I - L_{IQ}\|\leq \alpha\uu,
\end{eqnarray*}
and, therefore, $ \|f_{iQ}\|\geq(1-\alpha)\uu  $.
Similarly, we conclude for $T_Q: e_i\mapsto f_{iQ}$, that $\|T-T_Q\|<\alpha\|T\|$
and for $h=\sum a_i e_i \in \H$,
\begin{eqnarray}
  \| (T - T_Q) h \| &=& \|\sum a_i (T-T_Q)e_i \| =
\|\sum a_i (f_i - f_{iQ}) \|\notag \\ &=&
\|(L_I-L_{IQ})^\ast \{a_i\} \|  \leq \alpha\|T\|
\|\{a_i\}\|=\alpha \|T\| \|h\|. \notag
\end{eqnarray}

Applying Theorem~\ref{theorem:RestrictedInvertTheoremFinite}
to the finite sets $\F_{kQ}$ with cardinality $$n\geq
(1-\alpha)D^-(a;I)\, (2P+1)^d$$ and $\epsilon'$, we obtain
Riesz  sequences $\F'_{kQ}\subseteq \F_{kQ}$ with
\begin{eqnarray}
  |\F'_{kQ}| &\geq& (1-\epsilon')\uu^2\, (1-\alpha) \,
D^-(a;I)\, (2P+1)^d / \|T_Q\|^2 \notag \\
& \geq& \frac{(1-\epsilon')(1-\alpha)}{(1+\alpha)^2} \,
\uu^2\,D^-(a;I)\, (2P+1)^d / \|T\|^2
\end{eqnarray}
and lower Riesz bounds
$\cc(\epsilon')(1-\alpha)^2\uu^2.$

We further reduce $\F'_{kQ}\subseteq \F_{kQ}$ by setting
\begin{eqnarray}
  F''_{kQ} =  F'_{kQ}\cap a^{-1}(B_{P-Q}(k)). \label{equation:ensureRiesz}
\end{eqnarray}
Now,
\begin{eqnarray}
  |\F''_{kQ}| &\geq&  \frac{(1-\epsilon')(1-\alpha)}{(1+\alpha)^2} \,
\uu^2\,D^-(a;I)\, (2P+1)^d / \|T\|^2 \notag \\
  &&\qquad - \ K\big( (2P+1)^d-(2(P-Q)+1)^d\big)\notag \\
  &\geq&   \frac{(1-\epsilon')(1-\alpha)}{(1+\alpha)^2} \, \uu^2\,D^-(a;I)\,
(2P+1)^d / \|T\|^2 \notag \\
  &&\qquad - \  \alpha \frac{(1-\epsilon')(1-\alpha)}{(1+\alpha)^2} \,
\uu^2\,D^-(a;I)(2P+1)^d / \|T\|\notag \\
  &\geq&   \frac{(1-\epsilon')(1-\alpha)^2}{(1+\alpha)^2} \,
\uu^2\,D^-(a;I)\, (2P+1)^d / \|T\|^2 .
\end{eqnarray}

\noindent {\bf Claim 1}: If $J_k=\{j\in I: f_{jR}\in \F''_{kQ}\}$,
$J= \bigcup_{k\in (2P+1) \mathbb Z^d} J_k$ and  $$\F_Q(J)
=\bigcup_{k\in (2P+1) \mathbb Z^d}\F''_{kQ},$$ then
$\F_{Q}(J)$ is a Riesz sequence with lower Riesz bound
$\frac A B \cc(\epsilon')(1-\alpha)^2\uu^2$.
\vskip12pt

\noindent {\it Proof of Claim 1.} To see this, consider
$\widetilde \G_k =\{\widetilde g_{k'}:\ \|k'-k\|_\infty <P \}$
which  are disjoint subsets of $\G$. Furthermore,
(\ref{equation:ensureRiesz}) ensures that for  $k\in (2P+1) \mathbb Z^d$,
the set $\F''_{kQ}$ is a Riesz basis sequence in $ {\rm span}\, \G_k$,
where the lower Riesz  constant $\cc(\epsilon')(1-\alpha)^2\uu^2$
is given by Theorem~\ref{theorem:RestrictedInvertTheoremFinite} and does
not depend on $k$ or $P$.  For $\{a_j\}_{j\in J}\in \ell_2(J)$ we have, using
Lemma~\ref{lemma:PartitionRiesz},
\begin{eqnarray*}
  \Big\|\sum a_j f_{jQ}\Big\|^2&=&\Big\|\sum_{k \in (2P+1) \mathbb Z^d}
\sum_{j \in J_k} a_j f_{jQ}   \Big\|^2
        \geq    \frac {1/B} {1/A} \sum_{k\in (2P+1) \mathbb Z^d} \|
 \sum_{j \in J_k} a_j f_{jQ}   \|^2 \notag \\
         &\geq& \frac {A} {B} \sum_{k\in (2P+1) \mathbb Z^d}
\cc(\epsilon')(1-\alpha)^2\uu^2 \|  \{a_j\}_{j\in J_k}   \|^2 \notag \\
          &=& \frac {A\cc(\epsilon')}{B} (1-\alpha)^2\uu^2  \|
\{a_j\}_{j\in J}   \|^2.
\end{eqnarray*}
We conclude that $\F_{Q}(J)$ is a Riesz sequence with lower Riesz
bound
\[\frac {A}{B}\cc(\epsilon')(1-\alpha)^2 \uu^2,\]
 so Claim 1 is
shown.

It remains to show that we can replace $\F_{Q}(J)$ by
$\F(J)= \{f_j,\ j\in J \}= \{f_i,\ f_{iQ}\in  \F_Q(J) \}$,
 while controlling the lower Riesz bound. For $\{a_j\}_{j\in J}$ we have
\begin{eqnarray*}
  \|\sum a_j f_j \|&\geq & \|\sum a_j f_{jQ} \|- \|\sum a_j (f_j-f_{jQ})
\| \notag\\
    &\geq&  \Big(\frac{A} B \cc(\epsilon')\Big)^{\frac 1 2}\,(1-\alpha)\uu \| \{a_j\} \| -
\|(L_I-L_{IQ})^\ast \{a_i\} \|  \notag \\
    &\geq&  \Big(\frac{A}B\cc(\epsilon') (1-\alpha)^2 \uu^2\Big)^{\frac 1 2}\ \| \{a_j\} \| -
\Big(\frac {A \delta\,\cc(\epsilon') \uu^2\,} {8B}\Big)^{\frac 1 2} \| \{a_i\} \|  \notag \\
    &\geq& \Big(\frac A B \cc(\epsilon')\uu^2 ((1-\alpha)^2 - \frac \delta 8 )\Big)^{\frac 1 2}
\|\{a_i\}\|\notag\\
    &\geq& \Big(\frac A B \cc(\epsilon')\uu^2 ((1-2 \alpha - \frac \delta 8 )\Big)^{\frac 1 2}
\|\{a_i\}\|\notag\\
    &\geq&\Big( \frac A B \cc(\epsilon')\uu^2 ((1-\frac \delta 4 - \frac \delta 4 )\Big)^{\frac 1 2}
 \|\{a_i\}\|
    \geq \Big( (1-\delta) \cc(\epsilon)\ \, \uu^2\,\frac {A} {B} \Big)^{\frac 1 2}\|\{a_i\}\|.
\end{eqnarray*}

Clearly,
\begin{eqnarray}
 D(a;J)&=& D^-(a;J) \notag \\ &\geq&
\frac{(1-\epsilon')(1-\alpha)^2}{(1+\alpha)^2}\,
\uu^2\,\frac{ D^-(a;I)}{ \|T\|^2} \geq  (1-\epsilon) \,
\uu^2\frac{D^-(a;I)}{ \|T\|^2}. \notag
\end{eqnarray}
\hfill $\square$

\subsection{Proof of Theorem ~~\ref{theorem:MainForRiesz}, assuming (B)}

The only arguments in the proof of Theorem~\ref{theorem:MainForRiesz},
assuming (A), that require adjustments are Claim 1 and
the subsequent computations.

Let $K,P,\epsilon,\epsilon', \alpha$ be given as in the proof of Theorem~\ref{theorem:MainResult}, assuming (A). Choose $Q$ as in \eqref{equation:operatornomLR} with $\frac A B$ replaced by $1$.
Let $r'\in \ell_1(\mathbb Z^d)$  with $|\langle \widetilde g_n, \widetilde g_{n'}\rangle| \leq r'(n-n')$. Let $B'$
be the optimal Bessel bound of $\{g_n\}$.
Choose $R'>0$ so that
\begin{eqnarray}
  \Delta_{r'}(R')\frac{\|T\|^2 B'}{\cc(\epsilon') \uu^2}  K^2(2Q+1)^{2d} <\frac \delta 8.\notag
\end{eqnarray}

Set $W=2P+R'$.
Similarly to \eqref{equation:DropBorderRegions1}, we increase $P$ so that
  \begin{eqnarray*}\label{equation:DropBorderRegions2}
    K\big( W^d-(2(P-Q)+1)^d\big)\leq \alpha \uu^2 \frac{(1-\epsilon')(1-\alpha)}{(1+\alpha)^2} D^-(a;I)(2P+1)^d / \|T\|,
  \end{eqnarray*}
  while maintaining $W=2P+R'$.

Define $J$ and $J_k$, $k\in\mathbb Z^d$ as done in the proof of Theorem~\ref{theorem:MainResult}, assuming (A).
Let $x_k=\sum_{j\in J_k} a_j f_{jQ}$, $k\in\mathbb Z^d$, and $x=\sum x_k$.

Then
\begin{eqnarray*}\sum_{k} \sum_{k'\neq k} &&\langle x_k, x_{k'}\rangle \\
  &=&\sum_{k} \sum_{k'\neq k} \sum_{j\in J_k}\sum_{j'\in J_{k'}} \sum_{n: \|a(j)-n\|\leq Q}\sum_{n': \|a(j')-n'\|\leq Q} a_j \overline{a}_{j'} \langle f_j, g_n \rangle \overline{\langle f_{j'}, g_{n'} \rangle}\langle \widetilde g_n, \widetilde g_{n'} \rangle \notag \\
  &=&\sum_{k} \sum_{k'\neq k} \sum_{j\in J_k}\sum_{j'\in J_{k'}} \sum_{n: \|a(j)-n\|\leq Q}\sum_{n': \|a(j')-n'\|\leq Q} a_j \overline{a}_{j'} \langle f_j, g_n \rangle \overline{\langle f_{j'}, g_{n'} \rangle}M^W_{n,n'}\notag \\
  &=&\sum_{n} \sum_{n'}  \sum_{j\in J: \|a(j)-n\|\leq Q}\sum_{j'\in J: \|a(j')-n'\|\leq Q} a_j \overline{a}_{j'} \langle f_j, g_n \rangle \overline{\langle f_{j'}, g_{n'} \rangle}M^{R'}_{n,n'}\notag \\
  &=& \langle  S ,M^{R'} S \rangle,
\end{eqnarray*}
where
$$
    S=\big\{ \sum_{j\in J: \|a(j)-n\|\leq Q} a_j \langle f_j, g_n \rangle \big\}_n.
$$

We now compute the norm of $S$.
\begin{eqnarray}
  \|S\|_{\ell_2(\mathbb Z^d)}^2&=&\sum_n \big| \sum_{j\in J: \|a(j)-n\|\leq Q} a_j \langle f_j, g_n \rangle \big|^2
  \leq \sum_n \big| \sum_{j\in J: \|a(j)-n\|\leq Q} |a_j| \|f_j\| \|g_n\| \big|^2\notag \\
  &\leq&\|T\|^2 B'  \sum_n \big| \sum_{j\in J: \|a(j)-n\|\leq Q} |a_j| \big|^2\notag \\
  &\leq&\|T\|^2 B'  K(2Q+1)^d \sum_n  \sum_{j\in J: \|a(j)-n\|\leq Q} |a_j|^2\notag \\
  &\leq&\|T\|^2 B'  K(2Q+1)^d \sum_j  \sum_{n: \|a(j)-n\|\leq Q} |a_j|^2\notag \\
  &\leq&\|T\|^2 B'  K(2Q+1)^{2d} \sum_j  |a_j|^2\notag\notag \\
  &\leq&\|T\|^2 B'  K(2Q+1)^{2d} \sum_k \sum_{j\in J_k}  |a_j|^2\notag \\
  &\leq&\|T\|^2 B'     K(2Q+1)^{2d}  \sum_k \frac 1 {\cc(\epsilon') \uu^2} \|x_k\|^2\notag\\
  &=&\frac{\|T\|^2 B'}{\cc(\epsilon') \uu^2}  K(2Q+1)^{2d}  \sum_k \|x_k\|^2\notag.
\end{eqnarray}
Here, we used that for each $n$ at most $K(2Q+1)^d$ indices $j$ satisfy $\|a(j)-n\|_\infty \leq Q$, and, for each $j$ there are at most $(2Q+1)^d$ indices $n$ with $\|a(j)-n\|_\infty\leq Q$.
Recall that $B'$ is the Bessel bound of $\{g_n\}$ which therefore bounds $\{\|g_n\|\}_n$.

We conclude that for $x_k=\sum_{j\in J_k} a_j f_{jQ}$, $k\in\mathbb Z^d$,
  \begin{eqnarray*}
    \big| \sum_{k\neq k'} \langle x_k, x_{k'} \rangle \big| &\leq& \|S\|\,  \|M^{R'}\| \, \|S\| \notag \\
    &\leq& \frac{\|T\|^2 B'}{\cc(\epsilon')\uu^2}  K(2Q+1)^{2d}  \big(\sum_k \|x_k\|^2\big) K\Delta_{r'}(R')
    \notag \\
    &\leq & \frac \delta {8} \sum_k \|x_k\|^2.
  \end{eqnarray*}
  Now,
  \begin{eqnarray*}
  \|\sum_{j\in J} a_j f_{jQ}\|^2&=&
    \| \sum x_k \|^2 =  \sum_k\sum_{k'}\langle x_k,x_{k'}  \rangle \notag \\
            &=& \sum_k\|x_k\|^2 +\sum_k\sum_{k'\neq k}\langle x_k,x_{k'}  \rangle \geq  \sum_k\|x_k\|^2 -\sum_k\sum_{k'\neq k}|\langle x_k,x_{k'}  \rangle |\notag \\
            &\geq& \sum_k\|x_k\|^2 -  \frac \delta {8} \sum_k\|x_k\|^2\geq \cc(\epsilon') \uu^2 (1-\alpha)^2(1-\frac \delta {8})  \sum_{j\in J}|a_{j}|^2
  \end{eqnarray*}

 For $\F(J)= \{f_j,\ j\in J \}= \{f_i,\ f_{iQ}\in j\in \F_Q(J) \}$ and  $\{a_j\}\in \ell_2(J)$ we
 compute
\begin{eqnarray*}
  \|\sum a_j f_j \|&\geq & \|\sum a_j f_{jQ} \|- \|\sum a_j (f_j-f_{jQ}) \| \notag\\
    &\geq&\Big( \cc(\epsilon') \uu^2 (1-\alpha)^2(1-\tfrac \delta {8}) \Big)^{\frac 1 2}  \| \{a_j\} \| - \|(L_I-L_{IQ})^\ast \{a_j\} \|  \notag \\
    &\geq& \Big(\cc(\epsilon') \uu^2 (1-\alpha)^2(1-\tfrac \delta {8})  - \frac{\delta \cc(\epsilon')\uu^2}{8} \Big)^{\frac 1 2} \|\{a_j\}\|\notag\\
    &\geq&\cc(\epsilon')^{\frac 1 2}\Big((1 - \frac \delta {8} )^3- \tfrac \delta 8 \Big)^{\frac 1 2} \uu \|\{a_j\}\|\notag\\
    &\geq&\cc(\epsilon')^{\frac 1 2}\Big(1 - 3\tfrac \delta {8} - \tfrac \delta 8\Big)^{\frac 1 2} \uu\|\{a_j\}\|\notag\\
    &\geq&\cc(\epsilon')^{\frac 1 2}(1 - \tfrac \delta 2)^{\frac 1 2}\uu \|\{a_j\}\|\geq \Big((1-\delta) \cc(\epsilon) \uu^2\Big)^{\frac 1 2}\|\{a_j\}\|\notag .
\end{eqnarray*}
\hfill $\square$

\section{Gabor molecules and the Proof of Theorem~\ref{theorem:GaborExample1}}
\label{section:Gabor}

Similarly to the notion  {\bf Gabor system} $(\varphi;\Lambda)$ in Section~\ref{Section:Notation}, we define a {\bf Gabor multi-system} $(\varphi^1,\varphi^2,\ldots,\varphi^n;\Lambda^1,\Lambda^2, \ldots ,
\Lambda^n)$ generated by $n$ functions and $n$ sets of time frequency shifts as the
union of the corresponding Gabor systems
\[ (\varphi^1;\Lambda^1) \cup (\varphi^2;\Lambda^2)\cup \cdots \cup (\varphi^n;\Lambda^n).\]
Recall that the {\bf short-time Fourier transform} of a tempered distribution $f\in S'(\R^d)$ with respect to a Gaussian window function $g_0 \in S(\R^d)$ is
\[ V_{g_0}f (x,\omega) = \langle f,\pi(x,\omega)g_0 \rangle= \langle f,M_\omega T_x g_0 \rangle,\ \ \mbox{for}\ \
\lambda=(x,w)\in \R^{2d}.\]
A {\bf system of  Gabor molecules} $\{\varphi_\lambda\}_{\lambda\in\Lambda}$ associated to an enveloping function $\Gamma:\R^{2d}\rightarrow \R$ and a set of time frequency shifts $\Lambda\subseteq \R^{2d}$ consists of elements whose short-time Fourier transform have a common envelope of concentration:
\begin{eqnarray*}
|V_{g_0}\varphi_{x,\omega}(y,\xi)| \le \Gamma(y-x,\xi-\omega), \text{ for all } \lambda= (x,\omega)\in \Lambda,
\  (y,\xi)\in \R^{2d}.
\end{eqnarray*}

For $1\le p \le \infty$, the {\bf modulation space} $M^p(\R^d)$ consists of all tempered
distributions $f\in S'(\R^d)$ such that
\begin{equation}\label{E1}
 \|f\|_{M^p} = \|V_{g_0}f\|_{L^p} = \left ( \int \int_{\R^{2d}}|
\langle f,M_\omega T_x g_0 \rangle |^p
dx\ dw \right ) ^{1/p} < \infty
\end{equation}
with the usual adjustment for $p=\infty$.
It is known \cite{G} that $M^p$ is a Banach space for all
$1\le p\le \infty$, and any non-zero
function $g\in M^1$ can be substituted for the Gaussian $g_0$ in
(\ref{E1})  to define an equivalent
norm for $M^p$.
It is known
(see \cite{BCHZ2} Theorem 8 (a)) that in case $(\varphi,\Lambda)$ is a frame, $\varphi\in S_0(\R^d)$, then $(\varphi,\Lambda)$ is $\ell_1$--self-localized.

Theorem~\ref{theorem:GaborExample1} is  a special case of the following,
more general result.

\begin{theorem}\label{theorem:GaborExample2} Let $\epsilon,\delta>0$.
Let $\{g_\lambda\}_{\lambda \in \Lambda}\subseteq S_0(\R^d)$ be a set of
$\ell_1$--self-localized Gabor molecules with $\|g_\lambda\|\geq \uu$ and
Bessel bound $B<\infty$. Then exists a set $\Lambda_{\epsilon\delta}
\subseteq \Lambda$  so that\\[.1cm]

\noindent (1) \quad $\displaystyle  \frac{D(\Lambda_{\epsilon\delta})}
{D^-(\Lambda)} \geq \frac{(1-\epsilon)} B\uu^2$\\[.1cm]

\noindent (2) \quad $\{g_\lambda\}_{\lambda\in \Lambda_{\epsilon\delta}}$
is a Riesz sequence with lower Riesz bound $\cc(\epsilon)(1-\delta)\uu^2$.
\end{theorem}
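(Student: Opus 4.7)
The plan is to derive Theorem~\ref{theorem:GaborExample2} as an instance of Theorem~\ref{theorem:MainForRiesz}(B), by supplying an appropriate Gabor reference frame and localization map. First I would fix a reference Gabor frame $\G = \{\pi(\mu)\varphi_0\}_{\mu \in \alpha\Z^{2d}}$ generated by some $\varphi_0 \in S_0(\R^d)$ (for instance a Gaussian) with lattice constant $\alpha > 0$ small enough that $\G$ is a frame for $L^2(\R^d)$. For such a window and a sufficiently fine lattice, the canonical dual frame is again a Gabor frame with $S_0$-generator, hence $\ell_1$-self-localized, which is exactly hypothesis (B) of Theorem~\ref{theorem:MainForRiesz}. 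The localization map $a:\Lambda \to \alpha\Z^{2d}$ then sends each $\lambda \in \Lambda$ to its nearest lattice point (with some tiebreaking rule), so that $\|\lambda - a(\lambda)\|_\infty \le \alpha/2$.

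The central technical step is to verify that $(\{g_\lambda\}_{\lambda \in \Lambda}, a, \G)$ is $\ell_1$-localized. Using the standard change-of-window estimate for the short-time Fourier transform, there is a function $k \in W(L^\infty,\ell^1)(\R^{2d})$ (depending only on $\varphi_0$ and $g_0$, and finite because $\varphi_0 \in S_0$) with
\[ |V_{\varphi_0}f(z)| \le (|V_{g_0}f| * k)(z) \quad \text{for all } f \in L^2(\R^d). \]
Combining this with the molecule envelope $|V_{g_0}g_\lambda(z)| \le \Gamma(z - \lambda)$ yields
\[ |\langle g_\lambda, \pi(\mu)\varphi_0 \rangle| = |V_{\varphi_0}g_\lambda(\mu)| \le (\Gamma * k)(\mu - \lambda). \]
Since the molecules are $\ell_1$-self-localized, $\Gamma$ lies in $W(L^\infty,\ell^1)$ and hence so does $h := \Gamma * k$. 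Setting $r(n) = \sup_{\|y\|_\infty \le \alpha/2} h(n + y)$ for $n \in \alpha\Z^{2d}$ then furnishes a sequence $r \in \ell_1(\alpha\Z^{2d})$ realizing the $\ell_1$-localization of $(\{g_\lambda\}, a, \G)$ in the sense of Definition~\ref{definition:localization}.

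Next I would translate densities. A direct box-count comparison shows $D^\pm(a;J) = \alpha^{2d}\, D^\pm(J)$ for every $J \subseteq \Lambda$, where the right-hand side is the Beurling density. The ratio appearing in Theorem~\ref{theorem:MainForRiesz}(1) therefore equals $D(\Lambda_{\epsilon\delta})/D^-(\Lambda)$, and the Bessel hypothesis together with Proposition~\ref{proposition:BesselGivesFinite} forces $D^+(a;\Lambda) < \infty$; the degenerate case $D^-(\Lambda) = 0$ is handled by taking $\Lambda_{\epsilon\delta} = \emptyset$.

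Finally I would apply Theorem~\ref{theorem:MainForRiesz}(B): with Bessel bound $B$ and $\|g_\lambda\| \ge \uu$, its conclusions yield a uniform-density subset $\Lambda_{\epsilon\delta} \subseteq \Lambda$ satisfying (1), which is a Riesz sequence with lower Riesz bound $\cc(\epsilon)(1-\delta)\uu^2$, giving (2). The principal obstacle is the $\ell_1$-localization step, since it cannot be achieved with pure $L^1$ decay and forces the use of Wiener amalgam space estimates; the crucial check is that the $\ell_1$-self-localized hypothesis on the molecules interacts with the Feichtinger-algebra regularity of $\varphi_0$ to produce a summable envelope after discretization to the lattice $\alpha\Z^{2d}$.
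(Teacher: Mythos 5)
Your proposal is correct and follows essentially the same route as the paper: reduce to Theorem~\ref{theorem:MainForRiesz}(B) by choosing an $S_0$-windowed Gabor reference frame on a lattice, mapping each $\lambda\in\Lambda$ to its nearest lattice point, and converting lattice-indexed density to Beurling density (the covolume factor cancels in the ratio). The only cosmetic differences are that the paper picks the window so that $(g,\tfrac12\Z^{2d})$ is a \emph{tight} frame (making $\ell_1$-self-localization of the dual immediate) and cites \cite{BCHZ2} for the $\ell_1$-localization of the molecules against this frame, whereas you take a general fine lattice and spell out the change-of-window amalgam estimate yourself.
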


\begin{proof} 
  Set
  \begin{eqnarray}
    a:\Lambda \longrightarrow \mathbb Z^{2d}, \quad \lambda \mapsto
{\rm arg}\min_{n\in\mathbb Z^{2d}} \|\lambda - \tfrac 1 2 n \|_\infty.  \notag
\end{eqnarray}
Now, $D^-(a,\mathbb Z^{2d})=2^{-2d} D^-(\Lambda)$.
 Choose $g\in S_0(\R^d)$ with $\G=(g,\frac 1 2 \mathbb Z^{2d})
=\{\pi(\frac 1 2 n) g\}$ being a tight frame.  As $g\in S_0(\R^d)$,
we have $(g,\frac 1 2 \mathbb Z^{2d})$ is $\ell_1$-self-localized and
$(\{\varphi_\lambda\}_{\lambda},a,(g,\frac 1 2 \mathbb Z^{2d}))$ is $\ell_1$-localized
\cite{BCHZ2}.

A direct application of Theorem~\ref{theorem:MainResult}, assuming (B),
guarantees for each $\epsilon,\delta >0$ the existence of $\Lambda_{\epsilon\delta}
\subseteq \Lambda$ with $\{\varphi_\lambda\}_{\lambda\in \Lambda_{\epsilon\delta}}$ is a Riesz sequence and
\begin{eqnarray}
  D^-(\Lambda_\epsilon)= 2^{2d} D(a;\Lambda_\epsilon)
\ge2^{2d}\frac{(1-\epsilon)}{B}D^{-}(a;I)=
\frac{(1-\epsilon)}{B}D^{-}(\Lambda). \label{equation:MainResultGabor}
\end{eqnarray}

\end{proof}

\section{Appendix}

We will need a minor extension of  Theorem
\ref{theorem:RestrictedInvertTheoremFinite1}.
 Its proof is based on the formulation of
Casazza \cite{C} and Vershynin \cite{V1}.

\begin{theorem}[\bf Restricted Invertibility Theorem]
\label{theorem:RestrictedInvertTheoremFinite}
There exists a continuous and monotone function
$\cc:(0,1)\longrightarrow (0,1)$ so that for every $n\in \N$ and every linear
operator $T:\ell_2^n \rightarrow \ell_2^n$ with $\|Te_i\|\geq\uu$ for
$i=1,2,\ldots,n$ and $\{e_i\}_{i=1}^n$ an orthonormal basis for $\ell_2^n$,
there is a subset $J_\epsilon\subseteq \{1,2,\ldots,n\}$ satisfying\\[.1cm]

\noindent (1)  \quad $\displaystyle \frac {|J_\epsilon|} n \ge
\frac{(1-\epsilon)\uu^2}{\|T\|^2}, $ and\\[.1cm]

\noindent (2) \quad $\displaystyle \|\sum_{j\in J_\epsilon}b_jTe_j\|^2
\ge \cc(\epsilon)\,\uu^2\, \sum_{j\in J_\epsilon}|b_j|^2,\quad
\{b_j\}_{j\in J}\in \ell_2(J_\epsilon)$.
\end{theorem}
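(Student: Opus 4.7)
The plan is to reduce the theorem to Theorem~\ref{theorem:RestrictedInvertTheoremFinite1} by a diagonal rescaling, and then post-process the resulting function into a continuous monotone one.

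First I would normalize: introduce the diagonal operator $D : \ell_2^n \to \ell_2^n$ with $De_i = \|Te_i\|^{-1} e_i$, and set $S := TD$. Then $Se_i = Te_i/\|Te_i\|$ is a unit vector for each $i$, $\|D\| \leq \uu^{-1}$, and $\|S\| \leq \|T\|\,\|D\| \leq \|T\|/\uu$. Theorem~\ref{theorem:RestrictedInvertTheoremFinite1} applied to $S$ provides a function $\cc_0 : (0,1) \to (0,1)$ and, for each $\epsilon$, a set $J_\epsilon \subseteq \{1,\ldots,n\}$ with
\[ \frac{|J_\epsilon|}{n} \;\geq\; \frac{1-\epsilon}{\|S\|^2} \;\geq\; \frac{(1-\epsilon)\uu^2}{\|T\|^2}, \]
and with $\{Se_j\}_{j\in J_\epsilon}$ a Riesz basic sequence of lower Riesz bound $\cc_0(\epsilon)$. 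The first inequality is exactly conclusion (1); for (2), substituting $b_j = a_j \|Te_j\|$ gives $\sum_{j\in J_\epsilon} b_j Se_j = \sum_{j\in J_\epsilon} a_j Te_j$, so
\[ \Big\|\sum_{j\in J_\epsilon} a_j Te_j\Big\|^2 \;\geq\; \cc_0(\epsilon)\sum_{j\in J_\epsilon} |a_j|^2 \|Te_j\|^2 \;\geq\; \cc_0(\epsilon)\uu^2 \sum_{j\in J_\epsilon} |a_j|^2. \]

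It remains to upgrade $\cc_0$ into a continuous monotone function $\cc$. First observe that if the conclusion holds at level $\epsilon$ with constant $\cc_0(\epsilon)$, then the same subset $J_\epsilon$ also witnesses the statement for every $\epsilon' \geq \epsilon$, because $(1-\epsilon')\uu^2/\|T\|^2 \leq (1-\epsilon)\uu^2/\|T\|^2$; hence replacing $\cc_0$ by its non-decreasing envelope $\cc_1(\epsilon) := \sup_{0 < \epsilon'' \leq \epsilon} \cc_0(\epsilon'')$ loses nothing. Since $\cc_1$ is monotone and $(0,1)$-valued, it has at most countably many jumps and is bounded below on every compact sub-interval of $(0,1)$ by the left endpoint value $\cc_1(\epsilon_0) > 0$. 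A standard smoothing (for instance, the running average $\epsilon \mapsto (2/\epsilon)\int_{\epsilon/2}^{\epsilon} \cc_1(t)\,dt$, possibly followed by a piecewise-linear minorant construction on a dyadic grid to guarantee strict monotonicity) produces a continuous non-decreasing $\cc : (0,1) \to (0,1)$ with $\cc \leq \cc_1$, and the theorem follows with this $\cc$.

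The diagonal rescaling is elementary and the Riesz-basis estimate for $T$ follows automatically from the one for $S$. The only place where care is required is in producing the continuous, monotone, strictly positive minorant of $\cc_0$; this is purely a one-variable real-analysis exercise, and the main subtlety is verifying that strict positivity is preserved on all of $(0,1)$, which is ensured by monotonicity together with the fact that $\cc_1(\epsilon/2) > 0$ for every $\epsilon > 0$.
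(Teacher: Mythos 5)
Your proposal is correct and follows essentially the same route as the paper: the paper also normalizes via $Se_i = Te_i/\|Te_i\|$ (your $S=TD$), bounds $\|S\|\le\|T\|/\uu$, applies Theorem~\ref{theorem:RestrictedInvertTheoremFinite1} to $S$, and recovers (2) by reinserting the norms $\|Te_j\|\ge\uu$; it likewise handles continuity by first choosing $\cc$ monotone and then smoothing with a short-range integral average, just as you do.
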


\begin{proof}  Theorem \ref{theorem:RestrictedInvertTheoremFinite1}
does not assert
continuity
of $\cc$. Due to the defining property of $\cc$, we can choose  $\cc$ monotone,
 and replacing $\cc$ with $\cc_\zeta$, $\zeta>0$, with
$\cc_\zeta (\epsilon)=\int_{\min\{0,\epsilon-\zeta\}}^\epsilon
\cc(\epsilon')\,d\epsilon'$ ensures continuity of $\cc$.

Next, we want to replace the traditional assumption $\|Te_i\|=1$
with  $\|Te_i\|\geq \uu$.
Given $T$, define an operator $S$ by
\[ Se_i = \frac{Te_i}{\|Te_i\|},\quad i=1,2,\ldots, n.\]
Now,
\begin{eqnarray*}
\Big\|\sum_{i=1}^na_iSe_i\Big\| &=& \Big\|\sum_{i=1}^n \frac{a_i}{\|Te_i\|}
Te_i\Big\|
\le \|T\|\left ( \sum_{i=1}^n \left | \frac{a_i}{\|Te_i\|}
\right |^2 \right )^{1/2}
\le \frac{\|T\|}{\uu}\left ( \sum_{i=1}^n |a_i|^2 \right )^{1/2}.
\end{eqnarray*}
Hence,
\[ \|S\| \le \frac{\|T\|}{\uu}.\]
Applying Theorem  \ref{theorem:RestrictedInvertTheoremFinite1}
 to the operator $S$
with $\|Se_i\|=1$, $i=1,\ldots,n$, we obtain $J\subseteq \{1,\ldots,n\}$ with
\vskip12pt

\noindent (1) \quad $\displaystyle |J|\ge \frac{(1-\epsilon)\uu^2}{\|T\|^2}n,$
\vskip12pt

\noindent (2) \quad $\displaystyle \big\|\sum_{j\in J} b_jTe_j\big\|^2
=  \big\|\sum_{j\in J} b_j\|Te_j\| Se_j\big\|^2\ge
\cc(\epsilon)^2\sum_{j\in J} |b_j|^2\|Te_j\|^2\ge
\cc(\epsilon)^2\uu^2 \sum_{j\in J} |b_j|^2.
$
\end{proof}

We will also need a simple inequality for Riesz sequences.

\begin{lemma}\label{lemma:PartitionRiesz}
Let $\{f_i\}_{i\in I}$ be a Riesz basis sequence with bounds $A,B$.  Then for
any partition $\{I_j\}_{j\in J}$ of $I$ we have for all scalars
$\{a_i\}_{i\in I}$,
\[ \frac{A}{B}\sum_{j\in J}\|\sum_{i\in I_j}a_if_i\|^2 \le \|
\sum_{i\in I}a_if_i\|^2\le \frac{B}{A}\sum_{j\in J}
\|\sum_{i\in I_j}a_if_i\|^2.\]
\end{lemma}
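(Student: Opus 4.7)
The plan is to apply the Riesz sequence inequalities twice, once globally and once on each block of the partition, and then chain them together. Let me set $S = \sum_{i\in I} |a_i|^2$; since the partition $\{I_j\}_{j\in J}$ is disjoint and exhausts $I$, we also have $S = \sum_{j\in J} \sum_{i\in I_j} |a_i|^2$.

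First I would apply the Riesz sequence bounds globally: $A S \le \|\sum_{i\in I} a_i f_i\|^2 \le B S$. Next, I would apply the same Riesz sequence bounds to each sub-sum (which is legitimate since a subfamily of a Riesz sequence is still a Riesz sequence with the same bounds), obtaining
\[
A \sum_{i\in I_j} |a_i|^2 \;\le\; \Bigl\|\sum_{i\in I_j} a_i f_i\Bigr\|^2 \;\le\; B \sum_{i\in I_j} |a_i|^2,
\]
and then sum over $j\in J$ to get $A S \le \sum_{j\in J}\|\sum_{i\in I_j} a_i f_i\|^2 \le B S$.

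For the upper bound in the lemma, I would combine $\|\sum_{i\in I} a_i f_i\|^2 \le B S$ with $S \le \tfrac{1}{A}\sum_{j\in J}\|\sum_{i\in I_j} a_i f_i\|^2$ to conclude $\|\sum_{i\in I} a_i f_i\|^2 \le \tfrac{B}{A}\sum_{j\in J}\|\sum_{i\in I_j} a_i f_i\|^2$. For the lower bound, I would combine $\|\sum_{i\in I} a_i f_i\|^2 \ge A S$ with $S \ge \tfrac{1}{B}\sum_{j\in J}\|\sum_{i\in I_j} a_i f_i\|^2$ to conclude $\|\sum_{i\in I} a_i f_i\|^2 \ge \tfrac{A}{B}\sum_{j\in J}\|\sum_{i\in I_j} a_i f_i\|^2$.

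There is no real obstacle here: the only point one must verify is that the Riesz bounds pass to any sub-collection indexed by $I_j$, which is immediate from the definition applied to scalar sequences supported in $I_j$. The lemma is essentially the statement that the $\ell_2$-norm of the sequence $(\|\sum_{i\in I_j} a_i f_i\|)_{j\in J}$ is comparable to the $\ell_2$-norm of $(a_i)_{i\in I}$, and the comparison constants degrade by exactly the factor $B/A$ (the square of the Riesz condition number) as one would expect.
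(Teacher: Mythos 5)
Your proof is correct and is essentially the same argument as the paper's: apply the Riesz bounds globally and blockwise to compare everything to $\sum_{i\in I}|a_i|^2$, then chain the inequalities. The paper simply writes the whole chain in a single display, using the upper block bound with the lower global bound for one inequality and the reverse pairing for the other, exactly as you do.
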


\begin{proof}
\begin{eqnarray*}
\frac{A}{B}\sum_{j\in J}\|\sum_{i\in I_j}a_if_i\|^2&\le&
\frac{A}{B}B\sum_{j\in J}
\sum_{i\in I_j}|a_i|^2 = A\sum_{i\in I}|a_i|^2\le\|\sum_{i\in I}a_if_i\|^2\\
&\le& B \sum_{i\in I}|a_i|^2
= B\sum_{j\in J}\sum_{i\in I_j}|a_i|^2
\le \frac{B}{A}\sum_{j\in J}\|\sum_{i\in I_j}a_if_i\|^2
\end{eqnarray*}
\end{proof}


\begin{thebibliography}{WW}

\bibitem{BCHZ}
R. Balan, P.G. Casazza, C. Heil and Z. Landau, {\em Density,
overcompleteness and
localization of frames}, AMS Electronic Research Announcements, Vol 12 (2006)
71--86.

\bibitem{BCHZ1}  R. Balan, P.G. Casazza, C. Heil and Z. Landau,
{\em Density, overcompleteness
and localization of frames, 1.  Theory},  Jour. Fourier Anal. and Appls
{\bf 12} No. 2 (2006) 105--143.


\bibitem{BCHZ2} R. Balan, P.G. Casazza, C. Heil and Z. Landau,
{\em Density, overcompleteness
and localization of frames, 2.  Gabor Systems},  Jour. Fourier Anal. and Appls.
{\bf 12} No. 3 (2006) 309-344.

\bibitem{BCZ} R. Balan, P.G. Casazza, and Z. Landau, {\em Redundancy
for localized frames}, Preprint.

\bibitem{BT}  J. Bourgain and L. Tzafriri, {\em Invertibility
of ``large'' submatrices and applications to the geometry of
Banach spaces and Harmonic Analysis}, Israel J. Math.
{\bf 57} (1987) 137--224.

\bibitem{C}  P.G. Casazza, {\em Local theory of frames an Schauder
bases for Hilbert space}, Illinois J. math. {\bf 43} No. 2 (1999) 291--306.


\bibitem{C1}
{P}.{G}. {C}asazza,
{\em {T}he {A}rt of {F}rame {T}heory},
{T}aiwanese {J}. {M}ath. {\bf 4} (2000) 129--201.


\bibitem{CP}  P.G. Casazza, D. Edidin, D. Kalra and V. Paulsen,
{\em Projections and the
Kadison-Singer Problem}, Operators and Matrices, {\bf 1}, No. 3 (2007) 391-408.


\bibitem{CFTW}  P.G. Casazza, M. Fickus, J.C. Tremain, and
E. Weber, {\em The Kadison-Singer Problem in Mathematics
and Engineering:  Part II:  A detailed account}, Operator Theory,
Operator Algebras and Applications, Proceedings of the 25th GPOTS
Symposium (2005), D. Han, P.E.T. Jorgensen and D.R. Larson Eds.,
Contemporary Math {\bf 414} (2006) 299--356.

\bibitem{CT}  P.G. Casazza and J.C. Tremain, {\em The Kadison-Singer
problem in Mathematics and Engineering}, PNAS {\bf 103} No. 7
(2006) 2032--2039.

\bibitem{CT1}  P.G. Casazza and J.C. Tremain, {\em Revisiting the
Bourgain-Tzafriri restricted invertibility theorem}, Operators and Matrices,
{\bf 3} No. 1 (2009) 97-110.



\bibitem{Ch}
{O}le {C}hristensen.
{\em {A}n {I}ntroduction to {F}rames and {R}iesz {B}ases}.
{B}irkh{\"a}user, {B}oston, 2003.

\bibitem{G}
{K}. {G}r{\"o}chenig,
{\em {F}oundations of time-frequency analysis},
{A}ppl. {N}umer. {H}armon. {A}nal. {B}irkh{\"a}user {B}oston,
  {B}oston, {M}{A}, 2001.

\bibitem{G1}
{K}. {G}r{\"o}chenig,
{\em {L}ocalization of {F}rames, {B}anach {F}rames, and the
  {I}nvertibility of the {F}rame {O}perator},
 {J}. {F}ourier {A}nal. {A}ppl., {\bf 10} No. 2 (2004) 105--132.


\bibitem{K}
{R}.{V}. {K}adison and {J}.{R}. {R}ingrose,
{\em {F}undamentals of the {T}heory of {O}perator {A}lgebras. {I}}.
{AMS} {G}raduate {S}tudies in {M}athematics {\bf 15}, (1997).

\bibitem{Lan67}
H.~Landau, {Necessary density conditions for sampling an interpolation of
  certain entire functions.}, Acta Math. 117 (1967) 37--52.

\bibitem{Pfa07b}
G.~Pfander, {On the invertibility of ``rectangular'' bi-infinite matrices and
  applications in time–frequency analysis.}  Linear Algebra Appl. 429 (2008), no. 1, 331--345.

\bibitem{R}
{F} {R}iesz and {B}.{S}. {N}agy,
{\em {F}unctional {A}nalysis,}
{D}over {P}ublications, (1990).



\bibitem{V1}  R. Vershynin,
{\em John's decompositions: selecting a large part},
  Israel Journal of Mathematics {\bf 122} (2001), 253--277.


\bibitem{V2}  R. Vershynin, {\em Random sets of isomorphism of linear
operators on Hilbert space}, IMS Lecture Notes -- Monograph Series,
High Dimensional Probability {\bf 51} (2006) 148--154.


\bibitem{V3}  R. Vershynin, {\em Coordinate restrictions of linear
operators on $\ell_2^n$}, Preprinit.


\end{thebibliography}
\end{document}